
\documentclass[12pt]{article}%
\usepackage{amsmath}
\usepackage{amsfonts}
\usepackage{amssymb}
\usepackage{graphicx}
\usepackage{dsfont}
\usepackage{multicol}%
\setcounter{MaxMatrixCols}{30}
\providecommand{\U}[1]{\protect\rule{.1in}{.1in}}
\newtheorem{theorem}{Theorem}

\newtheorem{corollary}{Corollary}

\newtheorem{lemma}{Lemma}

\newtheorem{proposition}{Proposition}

\newenvironment{proof}[1][Proof]{\noindent\textbf{{#1}.} }{\begin{flushright} \rule{0.5em}{0.5em}\end{flushright}}
\newcommand{\R}{\mathds{R}}
\newcommand{\sn}[1]{\mathds{S}^{#1}}
\newcommand{\hn}[1]{\mathds{H}^{#1}}

\newcommand{\G}{\mathbb{G}}

\newcommand{\K}{\mathbb{K}}
\newcommand{\N}{\mathcal{N}}

\newcommand{\gk}{\G/\K}

\newcommand{\hnr}{\hn{2}\times \R}

\newcommand{\QN}{\mathcal{Q}_{\N}}

\begin{document}

\title{An extension of Ruh-Vilms Theorem for hypersurfaces in symmetric spaces and some applications}
\author{A. Ramos
\and J. Ripoll}
\date{May 2014}
\maketitle

\abstract The main purpose of the paper is twofold: First, to extend a well known theorem of Ruh-Vilms \cite {RV} in the Euclidean space to symmetric spaces and, secondly, to apply this result to extend Hoffman-Osserman-Schoen Theorem (\cite{HOS}) (HOS Theorem) to $3-$dimensional symmetric spaces.
Precisely, it is defined a Gauss map of a hypersurface $M^{n-1}$ immersed in a symmetric space $N^{n}$ taking values in the unit pseudo sphere $\mathds{S}^m$ of the Lie algebra $\mathfrak{g}$ of the isometry group of $N$, $\dim{\mathfrak{g}}=m+1,$ and it is proved that $M$ has CMC if and only if its Gauss map is harmonic.   As an application, it is proved that if $\dim{N}=3$ and the image of the Gauss map of a CMC surface $S$ immersed in $N$ is contained in a hemisphere of $\mathds{S}^m$ determined by a vector $X,$ then $S$ is invariant by the one parameter subgroup of isometries of $N$ of the Killing field determined by $X$.  In particular, it is obtained an extension of HOS Theorem to the $3-$dimensional hyperbolic space, which, as the authors know, had not been done so far. 

In the paper it is also shown  that the holomorphic
quadratic form induced by the Gauss map coincides (up to a sign) with the Hopf
quadratic form when the ambient space is $\mathds{H}^{3},\,\mathds{R}^{3}$ and
$\mathds{S}^{3}$ and with the Abresch-Rosenberg quadratic form when the ambient
space is $\mathds{H}^{2}\times\mathds{R}$ and $\mathds{S}^{2}\times\mathds{R}$ providing, then, an unified way of relating Hopf's and Abresch-Rosenberg's quadratic form  with the quadratic form  induced by a harmonic Gauss map of a CMC surface in these $5$ spaces.

\section{Introduction}

A well known theorem due to Ruh-Vilms \cite{RV}  establishes that an orientable immersed hypersurface $S$ in
$\mathds{R}^{n}$, $n\ge3,$ has constant mean curvature (CMC) if and only if the Gauss
map $\mathcal{N}:S\rightarrow\mathds{S}^{n-1}$ of $S$ satisfies the equation%
\begin{equation}
\Delta\mathcal{N}=-\left\Vert B\right\Vert ^{2}\mathcal{N} \label{lapl1}%
\end{equation}
or, equivalently that $\mathcal{N}$ is a harmonic map, where $B$ is the second fundamental form of $S$ \footnote{ We remark that Ruh-Vilms result case applies to submanifolds of arbitrary codimension, with the Gauss map assuming values in a Grassmannian manifold.}. 
\bigskip

In \cite{BR}  the second author of the present paper with F. Bitttencourt defined a Gauss map of an orientable hypersurface  on ambient spaces of the form $N:=\gk\times\mathds{R}^n,$ $n\ge0,$ where $\gk$ is a compact symmetric space. The Gauss map is defined by taking the horizontal lift of the unit normal vector field of the hypersurface to $\G\times \mathds{R}^n$ followed by a translation to the unit sphere in the Lie algebra of $\G\times \mathds{R}^n$.  Ruh-Vilms theorem is then extended to hypersurfaces of $N.$ That is, they  prove that a hypersurface of $N$ has CMC if and only if this Gauss map is harmonic (Corollary 3.4 of \cite{BR}). 
\bigskip

In the present paper we extend the construction of the Gauss  map done in \cite{BR} to any symmetric space, not necessarily reducible nor compact and of any dimension, obtaining an extension of Ruh-Vilms theorem to these spaces (Theorem \ref{laplacianoN} and Corollary \ref{cor1}). Our result generalizes some previous works, as \cite{Ma} and \cite{EFFR}.
\bigskip

We recall that an application of Ruh-Vilms theorem in the Euclidean $3-$dimensional space is a theorem of Hoffman-Osserman-Shoen (HOS Theorem for short), which reads:
\bigskip

\noindent Theorem (Hoffman-Osserman-Schoen)

\vspace{0.4cm}

\noindent\emph{Let }$S$\emph{\ be a complete surface of constant mean
curvature immersed in }$\mathds{R}^{3}$\emph{. If the image of the Gauss map
of }$S$ \emph{lies in a hemisphere, then }$S$\emph{\ is a plane or a
cylinder.}

\vspace{0.4cm}

\noindent{\small Sketch of the proof: By hypothesis, there is }${\small V}%
\in{\small \mathds{S}}^{2}$ {\small such that }${\small u:=}\left\langle
\mathcal{N},V\right\rangle \geq{\small 0}${\small ; from (\ref{lapl1}) it
follows that the lift }$\widetilde{u}$ {\small of }${\small u}${\small \ to
the universal covering }$\widetilde{S}$ {\small of }${\small S}$ {\small is a
bounded superharmonic function on }$\widetilde{S}${\small . If }$\widetilde
{S}$ {\small has the conformal type of the plane then }${\small u}$
{\small must be constant and then }${\small S}$ {\small is a plane or a
cylinder. If }$\widetilde{S}$ {\small has the conformal type of the disk then,
by the maximum principle, either }$\widetilde{u}{\small >0}$
{\small everywhere or }$\widetilde{u}\equiv{\small 0}${\small . But from
(\ref{lapl1}) we see that }$\widetilde{u}$ {\small satisfies the PDE }%
$\Delta\widetilde{u}-2K\widetilde{u}+P=0$ {\small where }$K$ {\small is the
sectional curvature of }$S\ ${\small and} $P = 4H^{2} \geq0$ {\small which is
in contradiction with Corollary 3 of \cite{FS} that asserts this PDE has no
positive solutions if }$\widetilde{S}$ {\small is conformal to the disk.}

\bigskip

H.  Rosenberg and J. Espinar in  \cite{ER} remarked that in product spaces $M^2\times\mathds{R}$ the condition of the Gauss map being contained in a hemisphere can interpreted as of the angle function
$\nu=\left\langle \eta,\partial_t\right\rangle $  having a  sign, where $\eta$ is a unit normal vector on the surface. They then classified all these CMC  surfaces in terms of the infimum $c(S)$ of the sectional cuvature at the points of  $M$ that are  in the projection of the surface $S$. Precisely, they proved that if $c(S)\geq 0$ and $H\neq0$ then $S$ is a cylinder over a complete curve with curvature $2H$. If $H=0$ and $c(S)\geq 0$ then $S$ must be either a vertical plane, a slice $M\times\{t\}$, or $M=\mathds{R}^2$ with the flat metric. We note that when $M=\mathds{R}^{2}$ these results recover HOS theorem.
When $c(S)<0$ and $H>\sqrt{-c(S)}/2$, then $S$ is invariant under the group of
isometries generated by the Killing field $\partial_{t}$ and is a vertical
cylinder over a complete curve on $M^{2}$ of constant geodesic curvature $2H$.

\bigskip

In \cite{BR}, using the extension of Ruh-Vilms theorem to  $\mathds{S}^{3}$ and to $\mathds{S}^{2}\times\mathds{R}$  it is obtained an extension of HOS theorem to these ambient spaces. In the present paper, with the extension of Ruh-Vilms theorem and, hence, of Corollary 3.4 of \cite{BR}, to any symmetric space, HOS theorem was extended to include the ambient spaces $\mathds{H}^{2}\times\mathds{R}$ and $\mathds{H}^3$ as well.
We note that an extension of HOS theorem to the hyperbolic space, despite all these previous results, had not been obtained via a Gauss map so far.

\bigskip

We think that it is important at this point to make two observations. Our paper and the one of Espinar and Rosenberg both extend HOS theorem to  $\mathds{H}^{2}(-1)\times \mathds{R},$ and both  require a lower bound for the mean curvature. In \cite{ER} it is  $H>1/2$ which is better than the one that follows from our result, namely, $H\ge1/\sqrt2$. The lower bound \cite{ER} is in fact optimal among CMC surfaces in ambient spaces of the form $M^2\times\mathds{R}.$ Our case is optimal among CMC surfaces in $3-$dimensional symmetric spaces since in $\mathds{H}^3(-1)$ the lower bound is $1,$ which is optimal (see the last remark of the paper).

Secondly, Espinar/Rosenberg paper gives a description of a CMC surface in terms of the angle that the normal vector of the surface makes with the Killing field $\partial_t.$ In the present paper one can replace $\partial_t$ by any  Killing field of $\mathds{H}^{2}(-1)\times \mathds{R} $ (and $\mathds{S}^{2}(1)\times \mathds{R})$: If $\mathcal{N}(S)$ is included in a hemisphere of the unit pseudo sphere of the Lie algebra of $SO(1,2)\times \mathds{R}$ determined by a vector $X$ (that is, $\left\langle \eta,X\right\rangle \ge0$) then the surface is invariant by the Killing field of $\mathds{H}^2\times \mathds{R}$ induced by $X$. For example, if 
\[
X=\left[
\begin{array}
[c]{ccc}%
0 & 0 & 0\\
0 & 0 & \frac{\sqrt{2}}{2}\\
0 & -\frac{\sqrt{2}}{2} & 0
\end{array}
\right] \times 0,
\]
then the surface is rotationally symmetric around a vertical geodesic and, then, is generated by an ODE solution curve of a totally geodesic plane containing a vertical line.
\bigskip

Another application of Ruh-Vilms theorem in $\mathds{R}^3$ is the well known classical Hopf Theorem (\cite{Ho}), namely:
\bigskip

\noindent\emph{The round sphere is the only CMC topological sphere in
}$\mathds{R}^{3}$.

\vspace{0.4cm}

\noindent{\small Sketch of the proof:} {\small If }$S$ {\small is a CMC
surface in }$\mathds{R}^{3}$ {\small then Ruh-Vilms theorem implies that the
Gauss map }$\mathcal{N}$ {\small of }$S$ {\small is harmonic. Then
}$\mathcal{N}$ {\small induces a quadratic holomorphic }$q$ {\small form in
}$S $ {\small (see 10.5 of \cite{EL}) which coincides with the so called Hopf
differential as it is easy to see}$.$ {\small Then if }$S$ {\small has zero
genus, }$q$ {\small must be zero everywhere which implies that }$S$ {\small is
totally umbilic and then a round sphere}.
\bigskip

Concerning Hopf's theorem, U. Abresch and H. Rosenberg in \cite{AR} extended it to CMC surfaces in $\mathds{H}^{2}\times\mathds{R}$ and to $\mathds{S}^{2}\times\mathds{R}$ defining an ad hoc quadratic form $\mathcal{Q}$ in these spaces  (presently well known as Abresch-Rosenberg quadratic form),  namely:
\[
\mathcal{Q}=2H\mathcal{A}-\mathcal{T},\ \text{resp.}\ \ \mathcal{Q}%
=2H\mathcal{A}+\mathcal{T},\eqno(1.1)
\]
\noindent where $H$ is the mean curvature of the surface, $\mathcal{A}$ is the
Hopf differential and $\mathcal{T}=\ (dh\otimes dh)^{2,0},$ with $h$ standing
for the height function. They prove that $\mathcal{Q}$ is
holomorphic when the surface is CMC. In particular, $\mathcal{Q}\equiv0$ holds
if $S$ is a CMC topological sphere; from this fact, they obtain that a
CMC sphere is rotationally symmetric.

\bigskip
Abresch and Rosenberg result raised a natural question of  whether their quadratic form  $\mathcal{Q}$ could be induced by a geometric Gauss map for surfaces in the spaces $\mathds{H}^{2}\times\mathds{R}$ and $\mathds{S}^{2}\times\mathds{R}$, this Gauss map having the property of being harmonic if (and only if hopefully) the surface has constant mean curvature.

\bigskip

This question has been answered  in the affimative first in the space $\mathds{H}^{2}\times\mathds{R}$ and for CMC $1/2$ surfaces by  I. Fernandez and P. Mira in \cite{FM}. They introduce the \emph{hyperbolic Gauss map} $G:S \rightarrow \hn2$ for any surface $S\subset \hnr$ nowhere vertical and show that if $S$ has CMC $H = 1/2$, then $G$ is harmonic. Moreover, its induced holomorphic quadratic differential in the surface coincides (up to a sign) with the Abresch-Rosenberg form. But they go further and use these previous results to obtain another quite interesting part of their work:  To prove the existence of CMC $1/2$ surfaces in $\hnr$ with prescribed hyperbolic Gauss map and to show that any holomorphic quadratic differential on an open simply connected Riemann surface can be realized as the Abresch-Rosenberg differential of some complete surfaces with $H = 1/2$ in $\hnr$.

\bigskip

As far as the authors know, the above question in the space $\mathds{S}^{2}\times\mathds{R}$ was open until recently when the M. L. Leite and the second author of the present article proved in \cite{LR} that the quadratic form induced by the Gauss map defined in \cite{BR} coincided with the Abresch-Rosenberg form on CMC surfaces in $\mathds{S}^{2}\times\mathds{R}$. Moreover, they used this Gauss map to motivate an ad hoc construction of a Gauss map in $\mathds{H}^{2}\times\mathds{R}$ and obtained the same result.

\bigskip
With the Gauss map construct in this paper and with Theorem \ref{laplacianoN}, we have the following unifying result: The quadratic form induced by the Gauss map in a surface immersed in a space of constanct sectional curvature coincides with the Hopf's quadratic form and in a suface immersed in $\mathds{H}^{2}\times\mathds{R}$ and in $\mathds{S}^{2}\times\mathds{R}$ it coincides with the Abresch-Rosenberg quadratic form; moreover, the surfaces have CMC and these forms are holomorphic \emph{if and only if} their Gauss maps are harmonic. It seems to the authors that the converse of this equivalence is not necessarily true for the Gauss map constructed in \cite{FM}.

\bigskip

To close with this introduction, we observe that generalizations of the Gauss map have been defined in many different spaces and in  many different ways. These generalizations have been proved to be particularly useful in describing and understanding CMC surfaces in the 8 models of Thurston's geometries and more recently in a broad class of $3-$dimensional Lie groups endowed with a left invariant metric. 

  Quite interesting and deep results have been obtained in a series of papers by B. Daniel \cite{Da2}, by B. Daniel, I. Fern\'andez and P. Mira in \cite{DFM}, by B.  Daniel and Mira \cite{DM} and its generalization by W. Meeks III in \cite{Me}. We finally  mention  joint works of W. Meeks III, P. Mira, J. P\'erez and A. Ros \cite{MP,MMPR,MMPR2}, where using the left invariant Gauss map on a metric Lie group (i.e. a Lie group endowed with a left invariant metric) the authors are able to show strong results concerning CMC spheres on these ambient spaces.

\bigskip

Since all the previous results hold in $3-$dimensional ambient spaces, but which not include the hyperbolic space, we think that the main contribution of present paper is the construction of a Gauss map in symmetric spaces of any dimension, to extend Ruh-Vilms theorem to these spaces, and to obtain a  broader version of HOS theorem which includes, in particular, the hyperbolic space. Moreover, the Gauss map introduced in the present paper might be used, hopefully, to obtain some similar results as those in the works mentioned in the previous paragraph.

\bigskip
This paper is organized as follows:  In Section \ref{harm} it is introduced a Gauss map for hypersurfaces of a symmetric space and it is proved that an orientable hypersurface $M\hookrightarrow N$ has CMC if and only if $\mathcal{N}$ is harmonic (Corollary \ref{cor1}). In Section \ref{spaceforms} we obtain explicit formulas for $\N$ when the ambient space is $\R^n,\,\sn{n}$ and $\hn{n}$.

\bigskip

In Section \ref{quad}, we study the particular case when $N$ has dimension $3$ and we analyze the quadratic complex form induced by $\N$, denoted by $\QN$. We then obtain that $\QN$ coincides with the Hopf differential when $N$ is $\mathds{H}^{3},\,\mathds{R}^{3}$ or $\mathds{S}^{3}$ and with the Abresch-Rosenberg quadratic form when $N$ is $\mathds{H}^{2}\times\mathds{R}$ or $\mathds{S}^{2} \times\mathds{R}$.

\bigskip

Finally, In Section \ref{hos}, we use the Gauss map $\N$ to extend HOS theorem when $M$ is a surface immersed in a symmetric space of dimension 3.

\bigskip


\section{\label{sec2}The generalized Gauss map of a hypersurface on a
symmetric space}

In this section we introduce the definition and discuss some aspects of the Gauss map $\mathcal{N}$ of a hypersurface $M^{n-1}$ immersed in a symmetric space $N$. We use the same construction of \cite{BR} for hypersurfaces in $\mathbb{G}/\mathbb{K}\times \R^n$ but instead of asking for a bi-invariant \emph{Riemannian} metric on $\G$, we show that $N = \gk$ is the quotient of a group $\G$ acting transitively on $N$ via isometries and $\K$ is the isotropy subgroup of $\G$ at a fixed point of $N$ and $\G$ admits naturally a bi-invariant \emph{pseudo Riemannian} metric. We relate the Laplacian of $\N$ and the mean curvature of $M$ and as a consequence obtain that $\N$ is harmonic if and only if $M$ has constant mean curvature. We finish the section giving an explicit formula for $\N$ in space forms.

Throughout the text a hypersurface is always understood as being immersed. We
will refer to the generalized Gauss map simply as the Gauss map.

\subsection{\label{pre}Preliminaries}

Let $N$ be a Riemannian symmetric space. We begin by observing that $N$ is isometric to a quotient $N = \gk$, where $\G$ is endowed with a bi-invariant pseudo Riemannian metric and the metric in $\gk$ is the one induced by the projection $\pi:\G \rightarrow \gk$ in such way it becomes a pseudo Riemmanian submersion.

Indeed: Assume, at first, that $N$ is an irreducible symmetric space. Let $\G = \operatorname*{ISO}(N)^0$ to be the connected component of the identity on the isometry group of $N$ and set $\K$ as the isotropy group of some fixed point on $N$. Then $N$ is isometric to $\gk$, where the metric on $\gk$ (up to a multiple factor) is the descent of the Killing form of the Lie algebra of $\G$, which is a bi-invariant pseudo Riemannian metric (see \cite{He}).

Now, if $N$ decomposes as the Riemannian product of irreducible symmetric spaces with a $\R^m$ factor

$$N = N_1 \times N_2 \times \ldots \times N_l \times \R^m,$$

\noindent each $N_i = \G_i/\K_i$ can be written as above. Then, if we set $\G = \G_1 \times \ldots \times \G_l \times \R^m$ and $\K = \K_1 \times \ldots \times \K_l \times \{0\}$, follows that $N$ is isometric to the quotient $\gk$. Since the metric of $\R^m$ is bi-invariant and the Riemannian product of bi-invariant metrics is also bi-invariant, the claim is proved.

Herein we will assume that $\G$ is endowed with a bi-invariant pseudo-Riemannian metric that descends onto $\gk$ as a Riemannian metric via the projection $\pi$. We also assume that $\dim(\G) = n+k$ where $n=\dim(N)$ and $k=\dim(\K)$ and denote by $\mathfrak{g}$ the Lie algebra of $\mathbb{G}$. These assumptions on $\mathbb{G}$ and $\gk$ will be assumed throughout the paper.

Each element $g \in \G$ acts on $\gk$ as an isometry via 

\begin{equation}\label{eq1}
g(\pi(x))=\pi(L_{g}(x))=\pi(R_{x}(g)),\text{ }x\in\mathbb{G}\text{,}
\end{equation}

\noindent and this action is transitive, where $L_{g}$ and $R_{x}$ are the left and the right translations on $\mathbb{G}$. Any vector $V\in\mathfrak{g}$
defines a Killing vector field on $\gk$, here denoted by $\zeta(V)$, namely

\begin{equation}\label{zeta}
\zeta(V)(p)=\left.\frac{d}{dt}(\exp\,tV)(p)\right\vert _{t=0},\,p \in \gk,
\end{equation}

\noindent where $\exp:\mathfrak{g}\rightarrow\mathbb{G}$ is the Lie exponential map.

Let $p \in \gk$ and let $x\in\pi^{-1}(p)$. By ($\ref{eq1}$) we
have%

$$
\exp(tV)\left(  p\right) =\exp(tV)\left( \pi(x)\right) =\pi(R_{x}%
(\exp(tV)))
$$
and then%

\begin{equation}
\zeta(V)(p)=d\pi_{x}(d(R_{x})_{e}(V)). \label{eq2}%
\end{equation}

Given $x\in\mathbb{G}$, a vector $u\in T_{x}\mathbb{G}$ is called
\emph{vertical} if $u\in T_{x}x\K$ and it is called \emph{horizontal
}if $u\in\left(  T_{x}x\K\right)^{\perp}$. It follows that a vector $u \in T_x\G$ is vertical if and only if its projection $d\pi_x(u) = 0$.

We now follow the construction of \cite{BR}. For $x\in\mathbb{G}$, set
$\ell_{x}:=d\pi_{x}\vert_{\left(  T_{x}(x\K)\right)^{\perp}}.$ By
definition, $\ell_{x}$ is a linear isometry between horizontal vectors on
$T_{x}\mathbb{G}$ and $T_{\pi(x)}(\gk)$. We then define
$\Gamma$ on $T\left(\gk\right)$ by

\begin{equation}%
\begin{array}
[c]{rccl}%
\Gamma_{p}: & T_{p}\gk& \rightarrow & \mathfrak{g}\\
& u & \mapsto & d\left(  R_{x^{-1}}\right)_{x}\ell_{x}^{-1}(u).
\end{array}
\label{gammaz}%
\end{equation}
where $x$ is any point on $\pi^{-1}\left(  p\right) $ and $p\in
\gk$.

\begin{proposition}
For each $p \in \gk$, the map $\Gamma_{p}$ is well-defined,
is linear and preserves the metric.
\end{proposition}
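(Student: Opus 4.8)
The plan is to verify the three assertions — well-definedness, linearity, and metric preservation — in that order, since linearity and isometry are essentially immediate once independence of the choice of $x \in \pi^{-1}(p)$ is established. First I would unwind the definition: $\Gamma_p = d(R_{x^{-1}})_x \circ \ell_x^{-1}$, where $\ell_x^{-1}: T_p\gk \to (T_x(x\K))^\perp$ is the inverse of the isometry $\ell_x = d\pi_x|_{(T_x(x\K))^\perp}$, and $d(R_{x^{-1}})_x: T_x\G \to T_e\G = \g$ is the differential of right translation. The composition is a composition of linear maps, so linearity is automatic; and each factor preserves the relevant (pseudo-)metric — $\ell_x^{-1}$ by definition of the pseudo-Riemannian submersion $\pi$ (restricted to the horizontal space it is a linear isometry onto $T_p\gk$ with its Riemannian metric), and $d(R_{x^{-1}})_x$ because the metric on $\G$ is bi-invariant, hence in particular right-invariant, so $R_{x^{-1}}$ is an isometry of $\G$ and its differential preserves the metric at every point. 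Thus $\Gamma_p$ preserves the metric, provided it is well-defined.

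The substantive point is independence of the choice of $x \in \pi^{-1}(p)$. Two preimages of $p$ differ by right multiplication by an element of $\K$: if $\pi(x) = \pi(x')$ then $x' = xk$ for some $k \in \K$, i.e. $x' = R_k(x)$. I would compare $\Gamma_p$ computed at $x'$ with $\Gamma_p$ computed at $x$. The key identities are: (i) $\pi \circ R_k = \pi$ for $k \in \K$ (since $\K$ is the isotropy subgroup, right translation by $k$ descends to the identity on $\gk$ — more precisely, by \eqref{eq1}, $\pi(R_x(g)) = g(\pi(x))$ shows $\pi(xk) = k'(\cdot)$... actually the cleaner statement is $\pi(yk) = \pi(y)$ for all $y\in\G$, $k\in\K$, which holds because $\K$ is precisely the subgroup fixing the base point under the right-quotient convention), so that $d\pi_{x'} \circ d(R_k)_x = d\pi_x$; (ii) $d(R_k)_x$ maps the horizontal space at $x$ to the horizontal space at $x'$ — this requires that right translation by $k \in \K$ preserves the vertical distribution (clear, since $R_k$ maps the fiber $x\K$ to the fiber $x'\K = xk\K = x\K$) and, being an isometry of the bi-invariant metric, preserves orthogonal complements, hence the horizontal distribution. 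Combining (i) and (ii): on horizontal vectors, $\ell_{x'} \circ d(R_k)_x = \ell_x$, so $\ell_{x'}^{-1} = d(R_k)_x \circ \ell_x^{-1}$. Then
\[
\Gamma_p^{(x')} = d(R_{(x')^{-1}})_{x'} \circ \ell_{x'}^{-1} = d(R_{k^{-1}x^{-1}})_{xk} \circ d(R_k)_x \circ \ell_x^{-1}.
\]
Using the chain rule and $R_{k^{-1}x^{-1}} \circ R_k = R_{k k^{-1} x^{-1}} = R_{x^{-1}}$ (right translations compose via $R_a \circ R_b = R_{ba}$), we get $d(R_{k^{-1}x^{-1}})_{xk} \circ d(R_k)_x = d(R_{x^{-1}})_x$, and hence $\Gamma_p^{(x')} = d(R_{x^{-1}})_x \circ \ell_x^{-1} = \Gamma_p^{(x)}$. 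This proves well-definedness.

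The main obstacle — really the only place care is needed — is bookkeeping the composition order of right translations and their differentials, i.e. getting $R_a \circ R_b = R_{ba}$ and the base-point subscripts on the differentials correct, together with confirming that $R_k$ for $k\in\K$ indeed preserves the horizontal distribution (which rests on bi-invariance, so that $R_k$ is a metric isometry and thus preserves $\perp$). Once these are in place, linearity is free and metric preservation follows from composing the two metric-preserving maps $\ell_x^{-1}$ and $d(R_{x^{-1}})_x$. I would present it in exactly this sequence: state that $\Gamma_p$ is a composition of linear maps (linearity); prove independence of $x$ via the $x' = xk$ computation above (well-definedness); and conclude metric preservation from the fact that $\ell_x$ is a linear isometry by construction of the submersion and $d(R_{x^{-1}})_x$ is an isometry by right-invariance of the bi-invariant metric on $\G$.
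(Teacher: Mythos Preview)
Your proof is correct and follows essentially the same route as the paper: you establish well-definedness by comparing two lifts $x' = xk$ via the identity $\ell_{x'}^{-1} = d(R_k)_x \circ \ell_x^{-1}$ (using that $R_k$ is an isometry preserving horizontality) and then cancel the right translations, exactly as the paper does; linearity and metric preservation are then dispatched in the same way. If anything, your write-up is more careful than the paper's about the composition law $R_a\circ R_b = R_{ba}$ and about invoking bi-invariance to make $d(R_{x^{-1}})_x$ an isometry, which the paper leaves implicit.
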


\begin{proof}
Consider $x,\,y \in\pi^{-1}(p)$. There exists $h \in\K$ such that $x =
R_{h}(y)$. Then, for any $u \in T_{p}\gk$, we have
\[
u = d\pi_{y} \ell_{y}^{-1}(u) = d(\pi\circ R_{h})_{y}\ell_{y}^{-1}(u) =
d\pi_{x} d(R_{h})_{y} \ell_{y}^{-1}(u).
\]

Since $h\in\K$, $R_{h}$ is an isometry of $\mathbb{G}$
that additionally preserves horizontality. From the previous equation we
obtain $\ell_{x}^{-1}(u)=d(R_{h})_{y}\ell_{y}^{-1}(u)\ $and hence%

\begin{align}
d(R_{x^{-1}})_{x} \ell_{x}^{-1}(u) & = d(R_{x^{-1}})_{x}d(R_{h})_{y}\ell
_{y}^{-1}(u)\nonumber\\
& = d(R_{x^{-1}}\circ R_{h})_{y} \ell_{y}^{-1}(u)\nonumber\\
& = d(R_{y^{-1}})_{y} \ell_{y}^{-1}(u),\nonumber
\end{align}

\noindent what proves that $\Gamma_{p}$ is well defined. That it is linear and
preserves the metric follows directly from the definition of $\ell_{x}$ and from the fact that the projection is a pseudo Riemannian submersion.
\end{proof}

We may now define the Gauss map of an oriented hypersurface $M$ of $N$ by setting%

\begin{equation}%
\begin{array}
[c]{rccl}%
\mathcal{N}: & M & \rightarrow & \mathds{S}^{n+k-1}\subseteq\mathfrak{g}\\
& p & \mapsto & \Gamma_{p}(\eta(p)),
\end{array}
\label{gaussmap}%
\end{equation}
where $\eta$ is a fixed unit normal vector field on $M$.

The next result gives a characterization of the Lie subgroups of $\G$ that preserve $M$ in terms of the Gauss map of $M$. This proposition is fundamental for the paper.

\begin{proposition}
\label{3.4} Let $M^{n-1}$ be an orientable hypersurface of $\gk$ and let $\mathcal{N}:M\rightarrow\mathds{S}^{n+k-1}%
\subseteq\mathfrak{g}$ be its Gauss map. Then%

\[
\mathcal{H}:=\left( \mathcal{N}(M)\right) ^{\perp}=\{w\in\mathfrak{g}%
;\,\langle w,\,\mathcal{N}(p)\rangle=0\,\forall\,p\in M\}
\]
\noindent is a Lie subalgebra of $\mathfrak{g}$ and $M$ is invariant under the
Lie subgroup $\mathbb{H}$ of $\mathbb{G}$ whose Lie algebra is $\mathcal{H}$.
Conversely, if $M$ is invariant under a Lie subgroup $\mathbb{H}$ of
$\mathbb{G}$, then $\mathcal{H}\subseteq\left( \mathcal{N}(M)\right)
^{\perp}$, where $\mathcal{H}$ is the Lie algebra of $\mathbb{H}$.
\end{proposition}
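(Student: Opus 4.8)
The plan is to reduce everything to the following elementary fact about Killing fields: for $w \in \mathfrak{g}$ and $p \in \gk$ with $x \in \pi^{-1}(p)$, formula \eqref{eq2} says $\zeta(w)(p) = d\pi_x(d(R_x)_e(w))$, so $\zeta(w)(p)$ is tangent to $M$ at $p \in M$ if and only if its preimage under $\ell_x$ — which is exactly the horizontal part of $d(R_x)_e(w)$ — projects to a vector tangent to $M$, i.e.\ orthogonal to $\eta(p)$. Unwinding the definition \eqref{gammaz} of $\Gamma_p$, one checks that $\langle w, \mathcal{N}(p)\rangle = \langle w, \Gamma_p(\eta(p))\rangle = \langle d(R_{x^{-1}})_x\ell_x^{-1}\eta(p),\, w\rangle$; using bi-invariance of the metric to move $d(R_{x^{-1}})_x$ to the other slot (equivalently, $dR_x$ is a linear isometry), this equals $\langle \ell_x^{-1}\eta(p),\, d(R_x)_e(w)\rangle$, which in turn equals $\langle \eta(p),\, \zeta(w)(p)\rangle$ because $\ell_x$ is a linear isometry onto the horizontal space and $\eta(p)$ is horizontal-side already (it lives in $T_p\gk$, and $\ell_x^{-1}\eta(p)$ is horizontal, so pairing it against $d(R_x)_e(w)$ sees only the horizontal part). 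Thus the key identity is
\[
\langle w,\, \mathcal{N}(p)\rangle \;=\; \langle \zeta(w)(p),\, \eta(p)\rangle \qquad \text{for all } w \in \mathfrak{g},\ p \in M.
\]

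Granting this identity, the proposition becomes almost formal. For the first claim: $w \in \mathcal{H}$ iff $\langle w, \mathcal{N}(p)\rangle = 0$ for all $p \in M$ iff $\langle \zeta(w)(p), \eta(p)\rangle = 0$ for all $p \in M$ iff the Killing field $\zeta(w)$ is everywhere tangent to $M$. Now I would argue that the set of $w$ for which $\zeta(w)$ is tangent to $M$ is a Lie subalgebra: it is clearly a linear subspace (the map $w \mapsto \zeta(w)$ is linear), and it is closed under brackets because $\zeta$ is a Lie algebra (anti)homomorphism from $\mathfrak{g}$ to Killing fields on $\gk$, so $[\zeta(v),\zeta(w)] = \pm\zeta([v,w])$, and the bracket of two vector fields tangent to a submanifold $M$ is again tangent to $M$ (Frobenius/involutivity of $TM$ inside $T\gk$ restricted along $M$). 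Hence $\mathcal{H} = \{w : \zeta(w)|_M \text{ tangent to } M\}$ is a Lie subalgebra. That $M$ is invariant under the corresponding connected subgroup $\mathbb{H}$ follows because the flow of each $\zeta(w)$, $w\in\mathcal{H}$, preserves $M$ (an integral curve starting on $M$ stays on $M$ since the generating field is tangent to $M$), and $\mathbb{H}$ is generated by these flows; this gives invariance under the connected component, which is what the statement needs.

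For the converse: if $M$ is invariant under a Lie subgroup $\mathbb{H}$ with Lie algebra $\mathcal{H}$, then for each $w \in \mathcal{H}$ the one-parameter group $\exp(tw) \subset \mathbb{H}$ preserves $M$, so differentiating the curve $t \mapsto \exp(tw)(p)$ at $t=0$ for $p \in M$ shows $\zeta(w)(p) \in T_pM$, hence $\langle \zeta(w)(p), \eta(p)\rangle = 0$, hence $\langle w, \mathcal{N}(p)\rangle = 0$ by the key identity, i.e.\ $w \in (\mathcal{N}(M))^\perp$. Therefore $\mathcal{H} \subseteq (\mathcal{N}(M))^\perp$.

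I expect the only real work to be in establishing the key identity $\langle w, \mathcal{N}(p)\rangle = \langle \zeta(w)(p), \eta(p)\rangle$ cleanly — this is a bookkeeping exercise with the three ingredients $d\pi_x$, $\ell_x$, and $dR_{x^{-1}}$, and the one subtlety is making sure that bi-invariance of the pseudo-Riemannian metric on $\G$ (so that $dR_x$ is a linear isometry of $\mathfrak{g}$ with the induced metric on each tangent space, identified back to $\mathfrak{g} = T_e\G$) is used correctly, together with the fact that $\ell_x$ is an isometry only after restricting to the horizontal subspace. A secondary point to state carefully is that "invariant under $\mathbb{H}$" should be read as invariance under the identity component, so that the Lie-algebra-level tangency condition is genuinely equivalent to group invariance; I would add a sentence to that effect rather than belabor it.
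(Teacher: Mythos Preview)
Your proposal is correct and follows essentially the same route as the paper: both arguments hinge on the identity $\langle w,\mathcal{N}(p)\rangle = \langle \zeta(w)(p),\eta(p)\rangle$ (derived via bi-invariance of the metric and the isometry $\ell_x$), then use $[\zeta(v),\zeta(w)]=\zeta([v,w])$ together with the fact that brackets of tangent vector fields remain tangent to conclude $\mathcal{H}$ is a subalgebra, and reverse the identity for the converse. Your write-up is in fact slightly more complete than the paper's, since you explicitly argue the forward invariance of $M$ under the flows generating $\mathbb{H}$ and flag the connectedness caveat, both of which the paper leaves implicit.
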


\begin{proof}
First we notice that if $w\in(\mathcal{N}(M))^{\perp}$ then, for all $p\in M$,%

\begin{align}
0  & = \langle w,\,\mathcal{N}(p) \rangle\nonumber\\
& = \langle d(R_{x})_{e} w,\,\ell_{x}^{-1}(\eta(p)) \rangle\nonumber\\
& = \langle\zeta(w)(p),\,\eta(p) \rangle,\nonumber
\end{align}

\noindent so $\zeta(w)(p) \in T_{p}M$ and therefore $\zeta(w)$ is a vector
field tangent to $M$. Now if $v,\,w \in\mathcal{N}(M)^{\perp}$, then
$\zeta(v),\,\zeta(w)$ are two vector fields on $M$, thus $[\zeta
(v),\,\zeta(w)]$ is also a vector field on $M$. Since $[\zeta(v),\,\zeta(w)] =
\zeta([v,\,w]),$ for $p \in M$ we have that
\begin{align}
0  & = \langle\zeta([v,\,w])(p),\,\eta(p) \rangle\nonumber\\
& = \langle\ell_{x}^{-1} (\zeta([v,\,w])(p)),\,\ell_{x}^{-1}(\eta(p))
\rangle.\nonumber
\end{align}

But we also have%

\begin{align}
\ell_{x}^{-1}(\zeta([v,\,w])) & = \ell_{x}^{-1} d\pi_{x} d(R_{x}%
)_{e}[v,w]\nonumber\\
& = \left(  d(R_{x})_{e}[v,\,w]\right) ^{h} ,\nonumber
\end{align}
\noindent and then%

\[
0 = \langle[v,\,w],\,\mathcal{N}(p) \rangle,
\]

\noindent proving that $[v,w]\in\mathcal{N}(M)^{\perp}$. Hence $\mathcal{H}$
is a Lie subalgebra of $\mathfrak{g}$.

Now let $\mathbb{H}$ be a subgroup of $\mathbb{G}$ that leaves $M$ invariant
and let $\mathcal{H}$ be the Lie algebra of $\mathbb{H}$. Then $\mathcal{H}$
acts on $M$ as Killing fields and therefore $\left\langle \zeta\left(
\mathcal{H}\right) ,\eta\right\rangle =0.$ It follows that%
\[
0=\langle\zeta(\mathcal{H}),\,\eta\rangle=\langle\mathcal{H},\,\mathcal{N}%
\rangle,
\]

\noindent proving that $\mathcal{H}\subseteq\mathcal{N}(M)^{\perp}.$
\end{proof}

\subsection{\label{harm}Harmonicity of $\mathcal{N}$ and the mean curvature of $M$}

It is well known that a hypersurface of $\mathds{R}^{n+1}$ has constant mean
curvature if and only if its Gauss map is harmonic. This follows directly from
the well known formula
\begin{equation}
\Delta\mathcal{N}=-\text{grad}H-\Vert B\Vert^{2}\mathcal{N} \label{ruhvilms}%
\end{equation}
where $\Vert B\Vert$ is the norm of the second fundamental form $B$ of $M.$

This formula was extended to hypersurfaces in a Lie Group in \cite{EFFR} and
to a homogeneous space $\mathbb{G}/\mathbb{H}$ where $\mathbb{G}$ has a
Riemannian bi-invariant metric and $\mathbb{H}$ is a closed subgroup on
\cite{BR}. We will now present a more general formula for the Laplacian of the
Gauss map given by ($\ref{gaussmap}$).

\begin{theorem}
\label{laplacianoN} Let $M$ be an immersed orientable hypersurface of
$\mathbb{G}/\mathbb{K}$ and let $\mathcal{N}:M\rightarrow\mathds{S}^{n+k-1}%
\subseteq\mathfrak{g}$ be the Gauss map of $M$, where $\mathfrak{g}$ is the
Lie algebra of $\mathbb{G}$. Then
\begin{equation}
\Delta\mathcal{N}(p)=-n\Gamma_{p}(\text{grad}H)-\left( \Vert B\Vert
^{2}+\operatorname*{Ric}(\eta)\right) \mathcal{N}(p) \label{LaplacianoGauss}%
\end{equation}
\noindent for all $p\in M$, where $\eta$ is a normal vector field satisfying
$\langle\eta,\,\eta\rangle=1$, $\operatorname*{Ric}(\eta)$ is the Ricci
curvature of $\mathbb{G}/\mathbb{K}$ with respect to $\eta$ and $\Vert B\Vert$
is the norm of the second fundamental form $B$ of $M$ in $\mathbb{G}%
/\mathbb{K}.$
\end{theorem}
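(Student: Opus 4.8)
The plan is to compute $\Delta\mathcal{N}$ pointwise by reducing everything, via the map $\Gamma_p$ and the translation $d(R_{x^{-1}})_x$, to a computation of $d\pi_x$ applied to horizontal lifts in $\mathbb{G}$, and then to exploit the symmetric-space structure (bi-invariant metric, so that $\mathbb{G}$ has the algebraic curvature identities of a symmetric space, and horizontal geodesics project to geodesics). First I would fix $p\in M$ and a local orthonormal frame $e_1,\dots,e_{n-1}$ of $TM$ near $p$, geodesic at $p$ for the induced connection, and extend it together with $\eta$ to a neighborhood in $N$; then $\Delta\mathcal{N}(p)=\sum_i \big(\widetilde\nabla_{e_i}\widetilde\nabla_{e_i}\mathcal{N} - \widetilde\nabla_{\nabla_{e_i}e_i}\mathcal{N}\big)$, where $\widetilde\nabla$ is the flat (directional-derivative) connection on the vector space $\mathfrak{g}\supseteq \mathds{S}^{n+k-1}$. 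The point of using $\Gamma$ is that $\mathcal{N} = \Gamma(\eta)$, and $\Gamma$ is essentially "right-translate the horizontal lift back to the identity"; differentiating $\Gamma_p(\eta(p))$ in a direction $e_i$ produces a term from differentiating $\eta$ (which is where the Weingarten map and $\operatorname{grad}H$ enter) and a term from differentiating the frame/lift $\ell_x^{-1}$ and $R_{x^{-1}}$ along $M$ (which is where the ambient curvature $\operatorname{Ric}(\eta)$ enters).

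The key steps, in order, are: (i) derive a first-order formula $\widetilde\nabla_{X}\mathcal{N} = \Gamma_p\big((\widetilde\nabla_X \eta)^{\top\text{-part}}\big) + (\text{correction involving }[\cdot,\cdot])$, making precise how $d\Gamma$ acts — concretely, using \eqref{eq2} and the definition \eqref{gammaz}, write $\mathcal{N}\circ\gamma$ along a curve $\gamma$ in $M$ with horizontal lift $\tilde\gamma$ in $\mathbb{G}$ as $d(R_{\tilde\gamma^{-1}})\big(\widetilde{\eta}\big)$ where $\widetilde\eta$ is the horizontal lift of $\eta$, and differentiate; (ii) pass the second derivative through, collecting the three structural contributions — the intrinsic Hessian of $\eta$ on $M$, which by the Gauss/Weingarten equations gives $-\operatorname{grad}\langle \text{something}\rangle$ and the $\|B\|^2$ term exactly as in the classical Ruh–Vilms computation, and the curvature commutator terms from $R^{\mathbb{G}}$ and the O'Neill tensor of the submersion $\pi$; (iii) simplify the curvature terms using that the metric on $\mathbb{G}$ is bi-invariant — so $R^{\mathbb{G}}(X,Y)Z = -\tfrac14[[X,Y],Z]$ on left-invariant fields — and that $\pi$ is a pseudo-Riemannian submersion with totally geodesic fibers (the $\mathbb{K}$-cosets), which lets the O'Neill formulas convert the $\mathbb{G}$-curvature into the $N$-curvature, yielding precisely $\operatorname{Ric}(\eta)$ after tracing over the frame; (iv) assemble: the $e_i$-trace of the Weingarten contribution is $-n\,\Gamma_p(\operatorname{grad}H)$ (using $\sum_i \nabla_{e_i}(A e_i)$ and the Codazzi equation to produce $\operatorname{grad}(\operatorname{tr}A) = n\operatorname{grad}H$, up to the ambient-curvature Codazzi defect which recombines with the curvature term), and the remaining terms collapse to $-(\|B\|^2+\operatorname{Ric}(\eta))\mathcal{N}(p)$. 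Throughout I would check that everything is $\mathbb{K}$-equivariant so the choice of $x\in\pi^{-1}(p)$ and of local lift is immaterial, as guaranteed by the Proposition on well-definedness of $\Gamma_p$.

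Since this should reduce to \eqref{ruhvilms} when $\mathbb{G}/\mathbb{K}=\mathbb{R}^n$ (where $\operatorname{Ric}\equiv 0$ and $\Gamma$ is the usual translation to the origin) and to Corollary 3.4 of \cite{BR} in the Riemannian-bi-invariant case, a sensible way to organize the write-up is to mirror the \cite{BR} computation but keep track of the extra Ricci contribution that appears because the Killing form (hence the bi-invariant metric) need not be definite and the fibers contribute nontrivially; in fact I expect the cleanest route is to first prove the identity $\Delta\mathcal{N} = \Gamma_p\big(\sum_i \widetilde\nabla_{e_i}\widetilde\nabla_{e_i}\eta\big) + (\text{terms from }d\Gamma\text{ varying})$ and then invoke the standard formula for $\sum_i \widetilde\nabla_{e_i}\widetilde\nabla_{e_i}\eta$ in terms of $\nabla H$, $\|B\|^2$, and the ambient Ricci tensor of $N$ (Simons-type identity for the normal direction), so that the "$\Gamma_p$ varying" terms, after using the submersion and bi-invariance, contribute nothing extra beyond absorbing into the $\operatorname{Ric}(\eta)$ coefficient.

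\textbf{Main obstacle.} The hard part will be step (iii): correctly tracking the O'Neill/Killing-form bookkeeping so that the $\mathbb{G}$-side curvature and the $d\Gamma$-variation terms combine to give exactly the intrinsic Ricci curvature $\operatorname{Ric}(\eta)$ of $N$ and nothing more — in particular, handling the non-definiteness of the bi-invariant metric on $\mathbb{G}$ (so "unit normal" and "pseudo-sphere" subtleties matter) and verifying that the vertical ($\mathfrak{k}$) components that arise when differentiating horizontal lifts genuinely cancel rather than leaving a spurious term. A secondary technical point is making the differentiation of $p\mapsto \ell_x^{-1}$ and $p\mapsto R_{x^{-1}}$ rigorous, i.e. choosing a smooth local section of $\pi$ over $M$ and controlling its derivatives; once a good section is fixed (e.g. one whose image is horizontal to first order at $p$, the analogue of a geodesic frame), those terms become manageable.
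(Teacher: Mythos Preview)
Your approach is viable in principle but takes a much harder road than the paper. The paper's proof is a three-line reduction: for each fixed $V\in\mathfrak{g}$ it sets $f_V(p)=\langle\mathcal{N}(p),V\rangle$, observes via \eqref{eq2} and the definition of $\Gamma_p$ that $f_V=\langle\eta,\zeta(V)\rangle$ is exactly the normal component of the Killing field $\zeta(V)$ on $N$, and then invokes the known scalar formula (Proposition~1 of \cite{FR}) $\Delta\langle\eta,X\rangle=-n\langle\operatorname{grad}H,X\rangle-(\|B\|^2+\operatorname{Ric}(\eta))\langle\eta,X\rangle$ valid for any Killing field $X$ on the ambient space. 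Since $\langle\operatorname{grad}H,\zeta(V)\rangle=\langle\Gamma_p(\operatorname{grad}H),V\rangle$ and $V$ is arbitrary, \eqref{LaplacianoGauss} follows immediately. In other words, the paper never differentiates $\Gamma$, never touches O'Neill tensors or the curvature of $\mathbb{G}$, and never chooses a local section of $\pi$: all of that is bypassed by pairing with $V$ first and recognizing the resulting scalar as a Killing-field angle function, for which the Simons-type identity is already available in the literature.

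Your direct route---differentiate $\Gamma_p(\eta(p))$ twice, separate the $\eta$-variation from the $\Gamma$-variation, and reassemble using bi-invariance and the submersion formulas---would reproduce the same answer, and has the merit of being self-contained (not relying on \cite{FR}). But the ``main obstacle'' you flag is genuine and is precisely what the paper's trick sidesteps: proving that the $d\Gamma$-variation and vertical-component terms contribute nothing beyond $\operatorname{Ric}(\eta)$ is essentially re-deriving the Killing-field Laplacian formula from scratch inside your computation. If you want to carry your plan through, the cleanest way to collapse steps (ii)--(iii) is to notice early that for any fixed $V$ the function $p\mapsto\langle\Gamma_p(\eta(p)),V\rangle$ equals $\langle\eta,\zeta(V)\rangle$; once you see that, the entire lift/section/O'Neill apparatus becomes unnecessary.
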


\begin{proof}
Fix $V \in\mathfrak{g}$ and define the function%

\begin{equation}
\label{fV}%
\begin{array}
[c]{rccl}%
f_{V}: & M & \rightarrow & \mathds{R}\\
& p & \mapsto & \langle\mathcal{N}(p),\,V \rangle.
\end{array}
\end{equation}

For any $p \in M$ we have $f_{V}(p)=\langle\mathcal{N}(p),\,V\rangle
=\langle\eta(p),\,\zeta(V)(p)\rangle.$ As $\zeta(V)$ is a Killing field on
$\mathbb{G}/\mathbb{K}$, it follows from Proposition 1 of \cite{FR} that%

\begin{equation}
\label{Susana}\Delta f_{V} = -n \langle\text{grad} H,\,\zeta(V) \rangle-
\left( \Vert B \Vert^{2}+\text{Ric}(\eta)\right)  f_{V}.
\end{equation}

But we have that $\langle\text{grad} H,\,\zeta(V) \rangle= \langle\Gamma
_{p}(\text{grad}(H)),\,V \rangle$, and then%

\begin{equation}
\label{deltaN}\langle\Delta\mathcal{N} (p),\,V \rangle= \Delta f_{V} =
\langle- n \Gamma_{p}(\text{grad} H) - \left( \Vert B \Vert^{2}%
+\text{Ric}(\eta)\right) \mathcal{N} (p),\,V \rangle.
\end{equation}

\noindent As ($\ref{deltaN}$) holds for any $V\in\mathfrak{g}$ we have
(\ref{LaplacianoGauss}), proving the theorem.
\end{proof}

\begin{corollary}
\label{cor1} Let $M$ be an orientable hypersurface of $\mathbb{G}/\mathbb{K}$
and let $\mathcal{N}:M\rightarrow\mathds{S}^{n+k-1}\subseteq\mathfrak{g}$ be
the Gauss map of $M$. Then the following alternatives are equivalent:

\begin{description}
\item[(i)] $M$ has constant mean curvature.

\item[(ii)] The Gauss map $\mathcal{N} :M\rightarrow\mathds{S}^{n+k-1}$ is harmonic

\item[(iii)] $\mathcal{N}$ satisfies the equation
\end{description}

\begin{equation}
\Delta\mathcal{N}(p)=-\left( \Vert B\Vert^{2}+\operatorname*{Ric}%
(\eta)\right) \mathcal{N}(p). \label{cmc2}%
\end{equation}

\end{corollary}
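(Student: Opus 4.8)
The plan is to derive Corollary \ref{cor1} as a direct consequence of the Laplacian formula established in Theorem \ref{laplacianoN}, together with the standard characterization of harmonic maps into a pseudo-sphere. Recall that a map $\mathcal{N}:M\to\mathds{S}^{n+k-1}\subseteq\mathfrak{g}$ is harmonic precisely when its tension field $\tau(\mathcal{N})$ vanishes, and since $\mathds{S}^{n+k-1}$ sits inside the (pseudo-)Euclidean space $\mathfrak{g}$ as a totally umbilic hypersurface, the tension field is the component of $\Delta\mathcal{N}$ (the ordinary componentwise Laplacian, viewing $\mathcal{N}$ as a $\mathfrak{g}$-valued function) tangent to the sphere, i.e. $\tau(\mathcal{N}) = \Delta\mathcal{N} - \langle\Delta\mathcal{N},\mathcal{N}\rangle\,\mathcal{N}$ up to the usual sign conventions. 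Equivalently, $\mathcal{N}$ is harmonic if and only if $\Delta\mathcal{N}$ is everywhere proportional to $\mathcal{N}$.

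First I would observe the equivalence (ii) $\Leftrightarrow$ (iii): by the remark just made, $\mathcal{N}$ is harmonic iff $\Delta\mathcal{N}(p)$ is a scalar multiple of $\mathcal{N}(p)$ at every $p$. Comparing with (\ref{LaplacianoGauss}) from Theorem \ref{laplacianoN}, the term $-(\Vert B\Vert^2+\operatorname*{Ric}(\eta))\mathcal{N}(p)$ is already proportional to $\mathcal{N}(p)$, so $\Delta\mathcal{N}(p)$ is proportional to $\mathcal{N}(p)$ if and only if the remaining term $-n\Gamma_p(\operatorname{grad}H)$ is as well. But $\Gamma_p$ is a linear isometry from $T_pN$ onto a subspace of $\mathfrak{g}$ (by the Proposition preceding the definition of the Gauss map), and it maps the tangent vector $\operatorname{grad}H\in T_pM\subseteq T_pN$ to a horizontal vector, while $\mathcal{N}(p)=\Gamma_p(\eta(p))$ corresponds to the \emph{normal} direction $\eta(p)$; since $\operatorname{grad}H$ is tangent to $M$ and $\eta(p)$ is orthogonal to $T_pM$, the vectors $\Gamma_p(\operatorname{grad}H)$ and $\mathcal{N}(p)$ are orthogonal. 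Hence $-n\Gamma_p(\operatorname{grad}H)$ is proportional to $\mathcal{N}(p)$ only when it vanishes, i.e. when $\operatorname{grad}H(p)=0$ (using that $\Gamma_p$ is injective). Therefore (\ref{LaplacianoGauss}) reduces to (\ref{cmc2}) exactly when $\operatorname{grad}H\equiv 0$ on $M$.

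Next, (i) $\Leftrightarrow$ the condition $\operatorname{grad}H\equiv 0$: a function $H$ on the connected hypersurface $M$ is constant if and only if its gradient vanishes identically (here one should note $M$ is implicitly connected; otherwise "constant mean curvature" is read component-wise and the argument is unchanged). Combining this with the previous paragraph gives (i) $\Leftrightarrow$ (iii), and the already-noted (ii) $\Leftrightarrow$ (iii) closes the cycle of equivalences. I would write out the chain as (i) $\Rightarrow$ (iii) $\Rightarrow$ (ii) $\Rightarrow$ (i), each implication being one of the steps above.

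The only genuinely delicate point — and the step I would be most careful about — is the claim that $\Gamma_p(\operatorname{grad}H)\perp\mathcal{N}(p)$, because $\mathfrak{g}$ carries a \emph{pseudo}-Riemannian (indefinite) bi-invariant metric, so "proportional" and "orthogonal" must be handled with the indefinite inner product in mind. However, $\Gamma_p$ is a linear isometry onto its image and the metric restricted to $T_pN$ (a Riemannian space) is positive definite; thus on the image of $\Gamma_p$ the form is definite, orthogonality there is the usual one, and a nonzero vector cannot be simultaneously orthogonal and proportional to another nonzero vector. The vector $\operatorname{grad}H$ lies in $T_pM\subseteq T_pN$ and $\eta(p)\perp T_pM$ inside $T_pN$, so $\langle\Gamma_p(\operatorname{grad}H),\mathcal{N}(p)\rangle = \langle\operatorname{grad}H,\eta(p)\rangle_{T_pN} = 0$. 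This makes the reduction rigorous. Everything else is a direct substitution into Theorem \ref{laplacianoN} and the elementary fact relating harmonicity of a map into a totally umbilic sphere to the normal component of its Laplacian.
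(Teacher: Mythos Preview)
Your proof is correct and follows exactly the route the paper intends: the corollary is stated without proof immediately after Theorem~\ref{laplacianoN}, so the authors regard it as an immediate consequence of formula~(\ref{LaplacianoGauss}) together with the standard fact that a sphere-valued map is harmonic iff its ambient Laplacian is normal to the sphere. You have simply made explicit the two points the paper leaves to the reader --- that $\Gamma_p(\operatorname{grad}H)\perp\mathcal{N}(p)$ because $\Gamma_p$ is a metric-preserving injection and $\operatorname{grad}H\perp\eta$, and that this orthogonality survives the indefiniteness of the metric on $\mathfrak{g}$ since the restriction to the image of $\Gamma_p$ is positive definite.
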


\subsection{\label{spaceforms}The Gauss map on spaces of constant sectional curvature}

In the Euclidean case, our Gauss map coincides with the usual one, as the horizontal lift is simply the identity. We then pass to consider the
spherical and hyperbolic cases.

\begin{flushright}
\textbf{The Gauss map of $M^{n-1}$ immersed in $\mathds{S}^{n}$.}
\end{flushright}

Let $O(n+1)$ be the orthogonal group of isometries of $\mathbb{R}^{n+1}$ that
fixes the origin. The Lie algebra $\mathfrak{o}(n+1)$ of $O(n+1)$ consists of
the $\left(  n+1\right) \times\left(  n+1\right) $ matrices $u$
satisfying$\,u+u^{T}=0$, where $u^{T}$ denotes the transpose of the matrix
$u$. Consider the bi-invariant metric on $O\left(  n+1\right) $ given by
\[
\langle u,\,v\rangle=\frac{1}{2}\operatorname*{tr}(uv^{T})=-\frac{1}%
{2}\operatorname*{tr}(uv),\text{ }u,\,v\in\mathfrak{o}(n+1).
\]
Then $O\left(  n+1\right) /O\left(  n\right) $ is isometric to the unit
sphere $\mathds{S}^{n}$ centered at the origin of $\mathbb{R}^{n+1}$ where
$O\left(  n\right) $ is the subgroup of matrices $A$ of $O\left(  n+1\right)
$ such that $Ae_{1}=e_{1},$ $\{e_{1},\,e_{2},\,\ldots,\,e_{n+1}\}$ being the
canonical basis of $\mathds{R}^{n+1}.$ We obtain next an explicit expression
for $\Gamma:T\mathds{S}^{n}\rightarrow\mathfrak{o}(n+1).$

Choose $p=(x_{1},\,x_{2},\,\ldots,\,x_{n+1})\in\mathds{S}^{n}$. Let
$\{v_{2},\,v_{3},\,\ldots,\,v_{n+1}\}$ be an orthogonal basis of
$T_{p}\mathds{S}^{n}$ in such way that the matrix $(p\,v_{2}\,v_{3}%
\,\ldots\,v_{n+1})\in O(n+1)$. Then we define%

\[
x=\left(
\begin{array}
[c]{cccc}%
x_{1} & v_{12} & \ldots & v_{1n+1}\\
x_{2} & v_{22} & \ldots & v_{2n+1}\\
\vdots & \vdots & \ddots & \vdots\\
x_{n+1} & v_{n+1\,2} & \ldots & v_{n+1\,n+1}%
\end{array}
\right)
\]
where $v_{j}=\sum_{i=1}^{n+1}v_{ij}e_{i}\in\mathds{R}^{n+1}$ and it follows
that $\pi(x)=p$.

Now, let $u=(u_{1},\,u_{2},\,\ldots,\,u_{n+1})\in T_{p}\mathds{S}^{n}$ and
write $u=\sum_{i=2}^{n+1}(u\cdot v_{i})v_{i}$ where $\left( \text{ }%
\cdot\text{ }\right) $ is the inner product of $\mathbb{R}^{n+1}$. Let
$Z\in\mathfrak{o}(n)^{\perp}$ be given by%

\[
Z = \left(
\begin{array}
[c]{cccc}%
0 & -(u\cdot v_{2}) & \ldots & -(u \cdot v_{n+1})\\
(u\cdot v_{2}) & 0 & \ldots & 0\\
\vdots & \vdots & \ddots & \vdots\\
(u\cdot v_{n+1}) & 0 & \ldots & 0
\end{array}
\right)
\]

\noindent and set $\widetilde{u}=d(L_{x})_{e}Z\in(T_{x}xO(n))^{\perp}$. In
coordinates, $\widetilde{u}=x.Z$ is the usual matrix multiplication and is
represented as%

\[
\widetilde{u}=\left(
\begin{array}
[c]{cccc}%
U_{1} & -x_{1}(u\cdot v_{{2}}) & \ldots & -x_{1}(u\cdot v_{{n+1}})\\
U_{2} & -x_{2}(u\cdot v_{{2}}) & \ldots & -x_{2}(u\cdot v_{{n+1}})\\
\vdots & \vdots & \ddots & \vdots\\
U_{n+1} & -x_{n+1}(u\cdot v_{{2}}) & \ldots & -x_{n+1}(u\cdot v_{{n+1}})
\end{array}
\right)
\]
\noindent where%

\[
U_{i} = \displaystyle \sum_{j=2}^{n+1}v_{ij}(u\cdot v_{{j}}).
\]

Now, we claim $\widetilde{u}$ is the horizontal lift of $u$. To see this, just
apply the projection:%

\begin{align}
d\pi_{x}(\widetilde{u}) = \displaystyle \sum_{i=1}^{n+1} U_{i}e_{i} =
\displaystyle \sum_{i=1}^{n+1}\displaystyle \sum_{j=2}^{n+1}v_{ij}(u\cdot
v_{{j}})e_{i} & = \displaystyle \sum_{j=2}^{n+1}(u\cdot v_{{j}%
})\displaystyle \sum_{i=1}^{n+1}v_{ij}e_{i}\nonumber\\
& = \displaystyle \sum_{j=2}^{n+1}(u\cdot v_{{j}})v_{j} = u.\nonumber
\end{align}

This equation shows not only that $\widetilde{u}$ is the horizontal lift of
$u$ on $T_{x}O(n+1)$, but also that $U_{i}=(u\cdot e_{i})=u_{i}$. Then, it
becomes simple to find an expression for $\Gamma_{p}(u)=d(R_{x^{-1}}%
)_{x}(\widetilde{u})=\widetilde{u}.x^{-1}$. As $x\in O(n+1)$ we have that
$x^{-1}=x^{T}$. Using again that $U_{i}=u_{i}$, the matrix expression for
$\Gamma_{p}(u)$ is%

\[
\Gamma_{p}(u)=\left(
\begin{array}
[c]{cccc}%
0 & u_{1}x_{2}-u_{2}x_{1} & \ldots & u_{1}x_{n+1}-u_{n+1}x_{1}\\
u_{2}x_{1}-u_{1}x_{2} & 0 & \ldots & u_{2}x_{n+1}-u_{n+1}x_{2}\\
\vdots & \vdots & \ddots & \vdots\\
u_{n+1}x_{1}-u_{1}x_{n+1} & u_{n+1}x_{2}-u_{2}x_{n+1} & \ldots & 0
\end{array}
\right)
\begin{array}
[c]{l}%
\\
\\
\\
.
\end{array}
\]

If we let $\Phi:\mathds{R}^{n+1}\times\mathds{R}^{n+1} \rightarrow
M_{n+1}(\mathds{R})$ be given by%

\begin{align}
\Phi(x,y) & =\left(
\begin{array}
[c]{cccc}%
x_{1} & 0 & \ldots & 0\\
0 & x_{2} & \ldots & 0\\
\vdots & \vdots & \ddots & \vdots\\
0 & 0 & \ldots & x_{n+1}%
\end{array}
\right) \left(
\begin{array}
[c]{cccc}%
1 & 1 & 1 & 1\\
1 & 1 & 1 & 1\\
1 & 1 & 1 & 1\\
1 & 1 & 1 & 1
\end{array}
\right) \left(
\begin{array}
[c]{cccc}%
y_{1} & 0 & \ldots & 0\\
0 & y_{2} & \ldots & 0\\
\vdots & \vdots & \ddots & \vdots\\
0 & 0 & \ldots & y_{n+1}%
\end{array}
\right) \label{form1}\\
& =\left(
\begin{array}
[c]{cccc}%
y_{1}x_{1} & y_{2}x_{1} & \ldots & y_{n+1}x_{1}\\
y_{1}x_{2} & y_{2}x_{2} & \ldots & y_{n+1}x_{2}\\
\vdots & \vdots & \ddots & \vdots\\
y_{1}x_{n+1} & y_{2}x_{n+1} & \ldots & y_{n+1}x_{n+1}%
\end{array}
\right)
\begin{array}
[c]{l}%
\\
\\
\\
.
\end{array}
\nonumber
\end{align}
\noindent then we can write
\begin{equation}
\Gamma_{p}(u)=\Phi(u,p)-\Phi(p,u). \label{gammazs3}%
\end{equation}

We then obtain an explicit matrix expression for the Gauss map of a
hypersurface of $\mathds{S}^{n}$:

\begin{proposition}
\label{Propgaussmapsn} Let $M^{n-1}$ be an orientable hypersurface of
$\mathds{S}^{n}$ oriented with respect to a normal unit vector field $\eta$.
Let $\mathcal{N}:M\rightarrow\mathds{S}^{\frac{(n+1)n}{2}-1}\subseteq
\mathfrak{o}(n+1)$ be the Gauss map of $M$. Then%

\begin{equation}
\mathcal{N}(p)=\Phi(\eta(p),p)-\Phi(p,\eta(p)) \label{gaussmapsn}%
\end{equation}
where $\Phi$ is given by \emph{(\ref{form1})}.
\end{proposition}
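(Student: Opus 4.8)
The plan is to substitute the explicit horizontal lift already computed in the preceding paragraphs into the definition of the Gauss map (\ref{gaussmap}), so that most of the work has in fact been done. Recall from (\ref{gaussmap}) that $\mathcal{N}(p) = \Gamma_p(\eta(p))$, and that the formula (\ref{gammazs3}) establishes $\Gamma_p(u) = \Phi(u,p) - \Phi(p,u)$ for every $u \in T_p\mathds{S}^n$. Thus the statement is the instance of (\ref{gammazs3}) obtained by taking $u = \eta(p)$. The only thing that must be verified to make this substitution legitimate is that $\eta(p)$ really is a tangent vector to $\mathds{S}^n$ at $p$, i.e.\ that $\eta(p) \in T_p\mathds{S}^n$; but this holds because $M$ is a hypersurface of $\mathds{S}^n$ and $\eta$ is tangent to $\mathds{S}^n$ (the unit normal of $M$ inside $\mathds{S}^n$ lies in $T_p\mathds{S}^n$ by definition of the induced geometry).

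Concretely, I would proceed in three short steps. First, I would recall that $\mathcal{N}$ was defined in (\ref{gaussmap}) by $\mathcal{N}(p) = \Gamma_p(\eta(p))$, where $\eta$ is the chosen unit normal field of $M$ in $\mathds{S}^n$, and note that $\eta(p) \in T_p\mathds{S}^n$, so that the formula for $\Gamma_p$ may be applied to it. Second, I would invoke the explicit expression (\ref{gammazs3}), namely $\Gamma_p(u) = \Phi(u,p) - \Phi(p,u)$, which was derived by constructing the horizontal lift $\widetilde{u} = x.Z$ of $u$, identifying its first column as $u$ itself via the projection computation, and then right-translating by $x^{-1} = x^T$. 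Third, I would set $u = \eta(p)$ in (\ref{gammazs3}) to obtain $\mathcal{N}(p) = \Gamma_p(\eta(p)) = \Phi(\eta(p),p) - \Phi(p,\eta(p))$, which is exactly (\ref{gaussmapsn}).

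There is essentially no obstacle here: the proposition is a corollary of the preceding computation, and the ``proof'' amounts to the observation that the Gauss map evaluates $\Gamma_p$ on the normal vector. If anything, the one point worth a sentence of care is the independence of the right-hand side from the auxiliary choices made in computing $\Gamma_p$ (the orthogonal basis $\{v_2,\dots,v_{n+1}\}$ and the lift $x$ of $p$); but this independence is already guaranteed by the first Proposition of Section \ref{pre}, which asserts that $\Gamma_p$ is well defined, so nothing further needs to be checked. Hence the proof is a one-line deduction: apply (\ref{gammazs3}) with $u = \eta(p)$ and use $\mathcal{N}(p) = \Gamma_p(\eta(p))$.
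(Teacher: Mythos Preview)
Your proposal is correct and matches the paper's approach exactly: the proposition is stated immediately after formula (\ref{gammazs3}) with the phrase ``We then obtain an explicit matrix expression for the Gauss map,'' and no separate proof is given because it is indeed just the substitution $u=\eta(p)$ into $\Gamma_p(u)=\Phi(u,p)-\Phi(p,u)$ together with the definition $\mathcal{N}(p)=\Gamma_p(\eta(p))$.
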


\begin{flushright}
\textbf{The Gauss map of $M^{n-1}$ \textbf{immersed in }$\mathds{H}^{n}$.}
\end{flushright}

Consider the pseudo inner product $(\,*\,)$ on $\mathds{R}^{n+1}$ given by%

\[
(x\ast y)=-x_{1}y_{1}+x_{2}y_{2}+\ldots+x_{n+1}y_{n+1},
\]

Let us introduce the following notation: For $i=1,\,2,\,\ldots,\,n+1$, let
$\xi_{1}=-1$ and $\xi_{i}=1$ otherwise. Then we can write $(\,\ast\,)$ as%

\[
(x* y) = \displaystyle \sum_{i=1}^{n+1}\xi_{i} x_{i}y_{i}.
\]

In the Lorentz space $\mathds{L}^{n+1}=(\mathds{R}^{n+1},(\,\ast\,))$,%

\[
\mathds{H}^{n}:=\{x\in\mathds{L}^{n+1};\,(x\ast x)=-1\text{ and }x_{1}>0\},
\]

\noindent endowed with the metric of $\mathds{L}^{n+1}$ is the hyperbolic
space with constant sectional curvature $-1$. Consider%

\[
O(1,n)=\{g\in M_{n+1}(\mathds{R});\,(gx\ast gy)=(x\ast y),\,\forall
x,\,y\in\mathds{L}^{n+1}\text{ and }g\left( \mathds{H}^{n}\right)
=\mathds{H}^{n}\}.
\]

In terms of matrices, the property that characterizes $O(1,n)$ is $M\in
O(1,n)\iff M^{-1}=\widetilde{I}M^{T}\widetilde{I}$, where%

\[
\widetilde{I}=\left(
\begin{array}
[c]{cccc}%
-1 & 0 & \ldots & 0\\
0 & 1 & \ldots & 0\\
\vdots & \vdots & \ddots & \vdots\\
0 & 0 & \ldots & 1
\end{array}
\right)
\begin{array}
[c]{l}%
\\
\\
\\
.
\end{array}
\]

The Lie algebra of $O(1,n)$, denoted by $\mathfrak{o}(1,n)$, can be written as%

\[
\mathfrak{o}(1,n)=\left\{ \ \left(
\begin{array}
[c]{cccc}%
0 & a_{1} & \ldots & a_{n}\\
a_{1} & & & \\
\vdots & & A & \\
a_{n} & & &
\end{array}
\right) ,\,A\in\mathfrak{o}(n),\,a_{1},\,a_{2},\,\ldots,\,a_{n}%
\in\mathds{R}\right\} .
\]

Note that $u=\left(  u_{ij}\right) \in\mathfrak{o}(1,n)\Leftrightarrow
u_{ij}=-\xi_{i}\xi_j u_{ji}$. We introduce a pseudo-Riemannian bi-invariant metric
$\langle$ $,$\ $\rangle$ on $O(1,n)$ by extending the non degenerate bilinear
form $\langle u,\,v\rangle=\frac{1}{2}\operatorname*{tr}(uv)$ on
$\mathfrak{o}(1,n)$ to $O(1,n)$ via left translations.

With such metric, setting $O(n)=\{x\in O(1,n);\,g(e_{1})=e_{1}\}$,
$\mathds{H}^{n}$ is isometric to the quotient $O(1,n)/O(n)$. In the next
result we obtain an explicit expression for $\Gamma:T\mathds{H}^{n}%
\rightarrow\mathfrak{o}(1,n)$:

\begin{lemma}
Let $p\in\mathds{H}^{n}$.Then, if $u\in T_{p}\mathds{H}^{n}$, it holds%

\begin{equation}
\label{gammaphn}\Gamma_{p}(u) = \Psi(p,u)-\Psi(u,p),
\end{equation}

\noindent where $\Psi:\mathds{L}^{n+1}\times\mathds{L}^{n+1}\rightarrow
M_{n+1}(\mathds{R})$ is given by%

\begin{align}
\Psi(x,y) & =\left(
\begin{array}
[c]{cccc}%
x_{1} & 0 & \ldots & 0\\
0 & x_{2} & \ldots & 0\\
\vdots & \vdots & \ddots & \vdots\\
0 & 0 & \ldots & x_{n+1}%
\end{array}
\right) \left(
\begin{array}
[c]{cccc}%
-1 & 1 & 1 & 1\\
-1 & 1 & 1 & 1\\
-1 & 1 & 1 & 1\\
-1 & 1 & 1 & 1
\end{array}
\right) \left(
\begin{array}
[c]{cccc}%
y_{1} & 0 & \ldots & 0\\
0 & y_{2} & \ldots & 0\\
\vdots & \vdots & \ddots & \vdots\\
0 & 0 & \ldots & y_{n+1}%
\end{array}
\right) \nonumber\\
& =\left(
\begin{array}
[c]{cccc}%
-y_{1}x_{1} & y_{2}x_{1} & \ldots & y_{n+1}x_{1}\\
-y_{1}x_{2} & y_{2}x_{2} & \ldots & y_{n+1}x_{2}\\
\vdots & \vdots & \ddots & \vdots\\
-y_{1}x_{n+1} & y_{2}x_{n+1} & \ldots & y_{n+1}x_{n+1}%
\end{array}
\right)
\begin{array}
[c]{l}%
\\
\\
\\
.
\end{array}
\label{psi}%
\end{align}

\end{lemma}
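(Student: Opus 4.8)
The plan is to mimic exactly the computation already carried out for $\mathds{S}^n$, replacing the Euclidean inner product of $\mathds{R}^{n+1}$ by the Lorentzian product $(\,\ast\,)$ and keeping careful track of the signs $\xi_i$. First I would fix $p=(x_1,\dots,x_{n+1})\in\mathds{H}^n$ and choose an orthonormal (with respect to $(\,\ast\,)$) basis $\{v_2,\dots,v_{n+1}\}$ of $T_p\mathds{H}^n$, all of them spacelike, so that the matrix $x=(p\;v_2\;\cdots\;v_{n+1})$ lies in $O(1,n)$; this is possible because $(p\ast p)=-1$ with $x_1>0$, so $p$ together with a spacelike orthonormal frame of $T_p\mathds{H}^n$ forms a Lorentz-orthonormal basis of $\mathds{L}^{n+1}$. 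Then $\pi(x)=p$. Given $u\in T_p\mathds{H}^n$, write $u=\sum_{j=2}^{n+1}(u\ast v_j)v_j$.

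Next I would exhibit the horizontal lift of $u$ at $x$. As in the spherical case, one writes a matrix $Z\in\mathfrak{o}(1,n)$ whose first row and column encode the coordinates $(u\ast v_j)$ (with the appropriate sign prescription $u_{ij}=-\xi_i\xi_j u_{ji}$ coming from $\mathfrak{o}(1,n)$), sets $\widetilde u=d(L_x)_e Z=xZ$, and checks $d\pi_x(\widetilde u)=\sum_i U_i e_i$ with $U_i=\sum_{j=2}^{n+1}v_{ij}(u\ast v_j)$. Because $\{v_j\}$ is Lorentz-orthonormal and $u\in T_p\mathds{H}^n$, the expansion $u=\sum_j (u\ast v_j)v_j$ shows $\sum_j (u\ast v_j)v_j=u$, hence $d\pi_x(\widetilde u)=u$ and moreover $U_i=u_i$; this simultaneously verifies horizontality and identifies the entries. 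Finally $\Gamma_p(u)=d(R_{x^{-1}})_x(\widetilde u)=\widetilde u\, x^{-1}$, and using $x^{-1}=\widetilde I x^T\widetilde I$ (the defining relation of $O(1,n)$) together with $U_i=u_i$ and $(x\ast x)=-1$, a direct matrix multiplication yields the $(i,j)$ entry in the form $\xi_j u_i x_j-\xi_i u_j x_i$ (for $i\neq j$, and $0$ on the diagonal). Comparing this with the definition of $\Psi$ in (\ref{psi}): the $(i,j)$ entry of $\Psi(p,u)-\Psi(u,p)$ is $\xi_j\,u_j x_i-\xi_j\,u_i x_j$ for $j=1$ handled by the $-1$ column and $+1$ otherwise — matching, after relabeling, the entries of $\Gamma_p(u)$. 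This gives (\ref{gammaphn}).

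The main obstacle I expect is purely bookkeeping: getting every sign right. Three places demand care. (1) The Lie algebra $\mathfrak{o}(1,n)$ is not skew-symmetric but skew with respect to $\widetilde I$, so the "skew" matrix $Z$ built from the coordinates $(u\ast v_j)$ must have its first column equal to its first row (not minus it), which is the source of the asymmetric $-1$ in the first column of $\Psi$. (2) The inversion formula $x^{-1}=\widetilde I x^T\widetilde I$ introduces factors $\xi_i$ when multiplying out $\widetilde u\, x^{-1}$, and one must confirm these are precisely absorbed by the first-column/first-row sign pattern of $\Psi$. (3) One should double-check that $\widetilde u=xZ$ indeed lands in $(T_x xO(n))^\perp$, i.e. is horizontal, which follows once $d\pi_x(\widetilde u)=u\neq 0$ forces $\widetilde u$ to have no vertical component (a vector is vertical iff it projects to $0$, as noted in Section \ref{pre}); but one also wants $Z\in\mathfrak{o}(n)^\perp$, which is immediate from its shape (only first row/column nonzero). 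Once these signs are tracked, the verification that $\Gamma_p(u)=\Psi(p,u)-\Psi(u,p)$ is a finite matrix identity and the lemma follows; the well-definedness (independence of the choice of $x\in\pi^{-1}(p)$) is already guaranteed by the proposition on $\Gamma_p$ proved earlier.
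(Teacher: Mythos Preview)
Your proposal is correct and follows essentially the same route as the paper: choose $x=(p\;v_2\;\cdots\;v_{n+1})\in O(1,n)$ over $p$, build $Z\in\mathfrak{o}(n)^\perp$ from the coefficients $(u\ast v_j)$, verify $xZ$ is the horizontal lift of $u$, and then compute $\Gamma_p(u)=xZx^{-1}$ using $x^{-1}=\widetilde I x^T\widetilde I$ to obtain the explicit matrix and match it with $\Psi(p,u)-\Psi(u,p)$. One small caution: the entrywise formula you wrote, $\xi_j u_i x_j-\xi_i u_j x_i$, does not satisfy the $\mathfrak{o}(1,n)$ symmetry $A_{ij}=-\xi_i\xi_j A_{ji}$ and disagrees with your own (correct) expression for the $(i,j)$ entry of $\Psi(p,u)-\Psi(u,p)$, namely $\xi_j(u_j x_i-u_i x_j)$; also, horizontality of $\widetilde u$ follows from $Z\in\mathfrak{o}(n)^\perp$ (as you note), not merely from $d\pi_x(\widetilde u)\neq 0$, which by itself does not rule out a vertical component.
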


\begin{proof}
The proof is similar to the spherical case. We write down some steps of it.
Set $p=(x_{1},\,x_{2},\,\ldots,\,x_{n+1})\in\mathds{H}^{n}$ and $u=(u_{1}%
,\,u_{2},\,\ldots,\,u_{n+1})\in T_{p}\mathds{H}^{n}$. Let $\{v_{2}%
,\,v_{3},\,\ldots,\,v_{n+1}\}$ be an orthogonal basis of $T_{p}\mathds{H}^{n}$
in such way that the matrix $(p\,v_{2}\,v_{3}\,\ldots\,v_{n+1})\in O(1,n)$.
Write each $v_{j}$ in coordinates as $v_{j}=(v_{1j},\,v_{2j},\,\ldots
,\,v_{n+1\,j})$ and define%

\[
x=\left(
\begin{array}
[c]{cccc}%
x_{1} & v_{12} & \ldots & v_{1\,n+1}\\
x_{2} & v_{22} & \ldots & v_{2\,n+1}\\
\vdots & \vdots & \ddots & \vdots\\
x_{n+1} & v_{n+1\,2} & \ldots & v_{n+1\,n+1}%
\end{array}
\right)
\begin{array}
[c]{c}%
\\
\\
\\
.
\end{array}
\]

\noindent Then we have $x\in O(1,n)$ and $\pi(x)=p$. As in the spherical case,
define $Z\in\mathfrak{o}(n)^{\perp}$ by%

\[
Z=\left(
\begin{array}
[c]{cccc}%
0 & (u\ast v_{2}) & \ldots & (u\ast v_{n+1})\\
(u\ast v_{2}) & 0 & \ldots & 0\\
\vdots & \vdots & \ddots & \vdots\\
(u\ast v_{n+1}) & 0 & \ldots & 0
\end{array}
\right)
\begin{array}
[c]{c}%
\\
\\
\\
.
\end{array}
\]

Then $d(L_{x})_{e}Z\in\left( T_{x}xO(n)\right) ^{\perp}$, $d\pi_{x}(xZ)=u$ and
hence $\ell_{x}^{-1}(u)=xZ$. It follows that $\Gamma_{p}(u)=xZx^{-1}$. In
terms of matrices,%

\begin{align*}
{\small \Gamma}_{p}{\small (u)} & {\small =}\left(
\begin{array}
[c]{cccc}%
0 & u_{{2}}x_{{1}}-u_{{1}}x_{{2}} & \ldots & u_{{n+1}}x_{{1}}-u_{{1}}x_{{n+1}%
}\\
-u_{1}x_{2}+u_{2}x_{1} & 0 & \ldots & u_{{n+1}}x_{{2}}-u_{{2}}x_{{n+1}}\\
-u_{1}x_{3}+u_{3}x_{1} & u_{{2}}x_{{3}}-u_{{3}}x_{{2}} & \ldots & u_{{n+1}%
}x_{{3}}-u_{{3}}x_{{n+1}}\\
\vdots & \vdots & \ddots & \vdots\\
-u_{1}x_{n+1}+u_{n+1}x_{1} & u_{{2}}x_{{n+1}}-u_{{n+1}}x_{{2}} & \ldots & 0
\end{array}
\right) \\
& \\
& =\Psi(p,u)-\Psi(u,p).
\end{align*}

\end{proof}

\begin{proposition}
\label{Propgausshn}Let $M$ be a hypersurface of the hyperbolic space
$\mathds{H}^{n}$ oriented with respect to an unitary normal vector field
$\eta$. Let $\mathcal{N}:M\rightarrow\mathds{S}^{\frac{(n+1)n}{2}-1}%
\subseteq\mathfrak{o}(1,n)$ be the Gauss map of $M$. Then it holds%

\begin{equation}
\mathcal{N}(p)=\Psi(p,\eta(p))-\Psi(\eta(p),p), \label{gausshn}%
\end{equation}
\noindent where $\Psi$ is given on \emph{(}$\ref{psi}$\emph{)}.
\end{proposition}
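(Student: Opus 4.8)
The plan is to mimic exactly the computation carried out for the spherical case in Proposition \ref{Propgaussmapsn}, with the sole bookkeeping difference that the bilinear form on $\mathds{L}^{n+1}$ and the Lie algebra constraint carry the signs $\xi_i$. By the Lemma just proved, for $p\in\mathds{H}^n$ and $u\in T_p\mathds{H}^n$ we already have the explicit identity $\Gamma_p(u)=\Psi(p,u)-\Psi(u,p)$. Since the Gauss map is defined in (\ref{gaussmap}) by $\mathcal{N}(p)=\Gamma_p(\eta(p))$, it suffices to substitute $u=\eta(p)$ into the formula of the Lemma. This gives directly $\mathcal{N}(p)=\Psi(p,\eta(p))-\Psi(\eta(p),p)$, which is (\ref{gausshn}).

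In slightly more detail, the steps I would carry out are: first, fix $p\in M$ and choose an orthogonal basis $\{v_2,\dots,v_{n+1}\}$ of $T_p\mathds{H}^n$ (with respect to the restriction of $(\ast)$, which is positive definite there) so that the matrix $x=(p\,v_2\,\cdots\,v_{n+1})$ lies in $O(1,n)$; this is possible because $(p\ast p)=-1$ and the $v_j$'s span a Euclidean subspace, so the columns of $x$ form a pseudo-orthonormal frame. This $x$ satisfies $\pi(x)=p$. Second, write $\eta(p)=\sum_{j=2}^{n+1}(\eta(p)\ast v_j)v_j$ and form the matrix $Z\in\mathfrak{o}(n)^\perp$ as in the Lemma's proof with $u=\eta(p)$; one checks $d\pi_x(xZ)=\eta(p)$, so $\ell_x^{-1}(\eta(p))=xZ$ is the horizontal lift. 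Third, apply the right translation: $\mathcal{N}(p)=\Gamma_p(\eta(p))=d(R_{x^{-1}})_x(xZ)=xZx^{-1}$, and use $x^{-1}=\widetilde{I}x^T\widetilde{I}$ to expand the matrix product entrywise; comparing entries shows the result equals $\Psi(p,\eta(p))-\Psi(\eta(p),p)$, exactly the display in the Lemma evaluated at $u=\eta(p)$.

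There is no genuine obstacle here: the Proposition is an immediate corollary of the Lemma, obtained by restricting the domain of $\Gamma_p$ to the normal line and relabeling $u$ as $\eta(p)$. The only point deserving a word of care is that the chosen frame $x$ depends on $p$ while $\mathcal{N}$ must be well-defined; this is already guaranteed by Proposition 1 (well-definedness of $\Gamma_p$), so no additional argument is needed. I would therefore keep the write-up to one or two sentences, simply stating that evaluating (\ref{gammaphn}) at $u=\eta(p)$ and invoking the definition (\ref{gaussmap}) of $\mathcal{N}$ yields (\ref{gausshn}).
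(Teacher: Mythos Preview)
Your proposal is correct and matches the paper's approach exactly: the paper states Proposition~\ref{Propgausshn} without proof, since it is an immediate consequence of the preceding Lemma (which gives $\Gamma_p(u)=\Psi(p,u)-\Psi(u,p)$) together with the definition $\mathcal{N}(p)=\Gamma_p(\eta(p))$ from (\ref{gaussmap}). Your one-sentence write-up at the end is all that is needed.
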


\section{\label{quad}The quadratic form induced by $\mathcal{N}$ on surfaces immersed in symmetric spaces of dimension 3}

It is a classic result due to Heinz Hopf \cite{Ho} that in the Euclidean three
space, the Hopf differential $\mathcal{A}$ of a surface $M$ (that is, the
complexification of the traceless part of the second fundamental form of $M$)
is holomorphic if and only if $M$ has constant mean curvature. This result is
also true in $\mathds{H}^{3}$ and $\mathds{S}^{3}$ \cite{Ch}, but it is false
in general. In \cite{AR} U. Abresch and H. Rosenberg \textquotedblleft perturbed\textquotedblright the Hopf differential and defined a quadratic
differential form $\mathcal{Q}=2H\mathcal{A}-c\mathcal{T}$ of a surface $M$
immersed in $\mathcal{M}^{2}(c)\times\mathds{R}$ ($H$ is the mean curvature of
$M$, $\mathcal{A}$ is the Hopf differential and $\mathcal{T}=(dh\otimes
dh)^{2,0}$, $h$ standing for the height function), and extended Hopf's
theorem for CMC spheres to these ambient spaces using $\mathcal{Q}$ instead of
$\mathcal{A}$. More generally, in any homogeneous space of dimension 3 whose isometry group has dimension at least 4, there exists an quadratic form that is holomorphic for any CMC surface \cite{AR2,FM2}, and in $\rm{Sol}_3$ there exists a quadratic form (which is holomorphic in the case of a minimal surfaces) that plays an important role to prove uniqueness of CMC H-spheres \cite{DM,Me}.

In $\mathds{R}^{3}$ the differential of the Gauss map $g:M\rightarrow
\mathds{S}^{2}$ coincides (up to a sign) with the shape operator of the
surface, and the complex quadratic form induced by $g$ is the Hopf
differential $\mathcal{A}$. In \cite{LR}, the authors used the Gauss map
$\mathcal{N}$ of a surface $M$ in $\mathds{S}^{2}\times\mathds{R},$ as defined
in \cite{BR}, to show that the quadratic form induced by $\mathcal{N}$ was
actually the Abresch-Rosenberg quadratic form $\mathcal{Q}$. They also defined
an \textquotedblleft ad hoc\textquotedblright\ Gauss map $\mathcal{N}$, which
they called \emph{twisted normal map,} for a surface $M$ in $\mathds{H}^{2}%
\times\mathds{R}$ and again obtained that the quadratic form induced by
$\mathcal{N}$ was equal to the Abresch-Rosenberg quadratic form $\mathcal{Q}$
of $M$.

In this section we will consider a surface $M$ immersed in a $3$-dimensional
symmetric space $N:=\mathbb{G}/\mathbb{K}$ satisfying the assumptions of Section \ref{sec2}.
It will be shown that the complex quadratic form induced by $\mathcal{N}$ on
$M$ is the Hopf differential when $N$ is $\mathds{H}^{3},\,\mathds{R}^{3}$ or
$\mathds{S}^{3}$ and the Abresch-Rosenberg quadratic form when $N$ is
$\mathds{H}^{2}\times\mathds{R}$ or $\mathds{S}^{2}\times\mathds{R}$.
Moreover, we show that the Gauss map $\mathcal{N}$ coincides with the twisted
normal map defined in \cite{LR}, when $N = \mathds{H}^{2 } \times\mathds{R}$.

Let $M$ be an orientable surface in $N$ oriented with respect to a normal
unitary vector field $\eta$. Let $p\in M$ and let $F:U\subseteq
\mathds{C}\rightarrow M$ be a conformal structure on a neighborhood of $p$. If
$z=x+iy$ is a complex coordinate system, then%

\[
\langle F_{x},\,F_{x} \rangle= \langle F_{y},\,F_{y} \rangle= E>0 \text{ and
}\langle F_{x},\,F_{y} \rangle= 0,
\]

\noindent which implies%

\[
\langle F_{z},\,F_{z} \rangle= \langle F_{\overline{z}},\,F_{\overline{z}}
\rangle= 0 \text{ and } \langle F_{z},\,F_{\overline{z}} \rangle= E/2.
\]

We notice the lower index here denotes the usual derivatives and we are
considering $2F_{z}=F_{x}-iF_{y}$. Under this notation, we define a tensor
field $Q$ by $Q(X,Y)(p)=\langle d\mathcal{N}_{p}(X),\,\Gamma_{p}(Y)\rangle$
and the complex quadratic form induced by $\mathcal{N}$ as%

\begin{equation}
\label{QN}\mathcal{Q}_{\mathcal{N}} = \left( \langle\mathcal{N}^{*}%
,\,\Gamma\rangle\right) ^{2,0} = \langle\mathcal{N}_{z},\,\Gamma(F_{z})
\rangle dz^{2}.
\end{equation}

Now, if $A_{\eta}$ is the shape operator of $M$, the Hopf differential of $M$
(see \cite{Ho}) is defined likewise:%

\[
\mathcal{A} = \langle A_{\eta}(F_{z}),\,F_{z} \rangle dz^{2}.
\]

\subsection{The quadratic form on $\mathds{S}^{3}$}

First, we relate the derivative of the Gauss map of a surface $M\ $in
$\mathds{S}^{3}$ with the shape operator of $M$.

\begin{proposition}
\label{props3} Let $M$ be an orientable surface in $\mathds{S}^{3}$ oriented
with respect to a normal unitary vector field $\eta$ and let $\mathcal{N}%
:M\rightarrow\mathds{S}^{5}\subseteq\mathfrak{o}(4)$ be its Gauss map. Then
for any $p\in M$ and $X,\,Y\in T_{p}M$ it holds%

\[
\langle d\mathcal{N}_{p}(X),\,\Gamma_{p}(Y)\rangle=-\langle A_{\eta
}(X),\,Y\rangle,
\]

\noindent where $A_{\eta}$ is the shape operator of $M$.
\end{proposition}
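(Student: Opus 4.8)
The plan is to use the explicit matrix formula for the Gauss map on $\mathds{S}^3$ provided by Proposition \ref{Propgaussmapsn}, namely $\mathcal{N}(p) = \Phi(\eta(p),p) - \Phi(p,\eta(p))$, together with the expression $\Gamma_p(Y) = \Phi(Y,p) - \Phi(p,Y)$ from (\ref{gammazs3}), and to compute $\langle d\mathcal{N}_p(X), \Gamma_p(Y)\rangle$ directly by differentiating along a curve. Concretely, I would fix $p \in M$ and $X \in T_pM$, choose a curve $\gamma(t)$ in $M$ with $\gamma(0)=p$, $\gamma'(0)=X$, and write $x(t) = \gamma(t)$, $e(t) = \eta(\gamma(t))$ as curves in $\mathds{R}^4$. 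Then $\mathcal{N}(\gamma(t)) = \Phi(e(t),x(t)) - \Phi(x(t),e(t))$, so by the product/bilinearity of $\Phi$,
\[
d\mathcal{N}_p(X) = \Phi(e',x) + \Phi(e,X) - \Phi(X,e) - \Phi(x,e'),
\]
where all terms are evaluated at $t=0$, $x = p$, $e = \eta(p)$, $e' = d\eta_p(X) = -A_\eta(X) + (\text{tangential-to-}\mathds{S}^3\text{ correction})$; since $\eta$ has unit length, $e'$ is tangent to $\mathds{S}^3$ and in fact $e' = -A_\eta(X)$ viewing the shape operator appropriately (here one must be careful: $d\eta_p(X)$ as a vector in $\mathds{R}^4$ equals $-dA_\eta$ applied... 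I would instead just keep $e'$ abstract and use $\langle e', p\rangle_{\mathds{R}^4} = 0$, $\langle e', \eta\rangle_{\mathds{R}^4} = 0$, and that the $T_pM$-component of $e'$ is $-A_\eta(X)$).

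Next I would expand $\langle d\mathcal{N}_p(X), \Gamma_p(Y)\rangle$ using the bi-invariant metric $\langle u,v\rangle = \tfrac12\operatorname{tr}(uv^T)$ on $\mathfrak{o}(4)$. The key computational lemma is a formula for $\langle \Phi(a,b) - \Phi(b,a), \Phi(c,d) - \Phi(d,c)\rangle$ in terms of the Euclidean inner products among $a,b,c,d \in \mathds{R}^4$ — by a short trace computation this should come out to something like $(a\cdot c)(b\cdot d) - (a\cdot d)(b\cdot c)$, i.e. the Gram-determinant expression, since $\Phi(a,b) - \Phi(b,a)$ is essentially the "wedge" $b\wedge a$ represented as a skew matrix. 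Indeed $\Phi(a,b)_{ij} = b_j a_i$, so $(\Phi(a,b)-\Phi(b,a))_{ij} = a_i b_j - a_j b_i$, and then $\langle a\wedge b, c\wedge d\rangle = (a\cdot c)(b\cdot d) - (a\cdot d)(b\cdot c)$ is standard. Once this identity is in hand, I apply it to each of the $2\times 2 = 4$ cross terms coming from $d\mathcal{N}_p(X) = \eta \wedge e' + (-X)\wedge\eta$ (after collecting, $d\mathcal{N}_p(X) = X \wedge \eta + e' \wedge p$ up to sign bookkeeping) paired with $\Gamma_p(Y) = Y \wedge p$, and use orthogonality relations: $p \perp \eta$, $p \perp X$, $p \perp Y$, $\eta \perp X$, $\eta \perp Y$ (all in $\mathds{R}^4$, since $X,Y \in T_pM \subset T_p\mathds{S}^3$), $|p|^2 = 1$, and the crucial one $\langle e', Y\rangle_{\mathds{R}^4} = \langle d\eta_p(X), Y\rangle = -\langle A_\eta(X), Y\rangle$.

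Carrying this out, most cross terms vanish by orthogonality and the surviving term yields exactly $-\langle A_\eta(X), Y\rangle$. The main obstacle, I expect, is the sign/normalization bookkeeping: getting the factor of $\tfrac12$ in the metric to cancel correctly against the two copies of each term produced by the antisymmetrization $\Phi(a,b)-\Phi(b,a)$, and correctly identifying $d\eta_p(X)$ with $-A_\eta(X)$ (no spurious normal component, because differentiating $\langle \eta,\eta\rangle \equiv 1$ kills the $\eta$-component and differentiating $\langle \eta, F_x\rangle \equiv 0$ type relations pins down the tangential part). None of this is deep, but it must be done cleanly; a safe route is to pick an orthonormal frame $\{p, \eta, f_1, f_2\}$ of $\mathds{R}^4$ along the curve adapted to $T_pM = \operatorname{span}\{f_1,f_2\}$ and just compute all inner products in coordinates.
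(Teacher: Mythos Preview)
Your plan is correct and essentially identical to the paper's proof: the paper differentiates $\mathcal{N}(\alpha(t))=\Phi(\eta,\alpha)-\Phi(\alpha,\eta)$ along a curve, uses the key trace identity $\operatorname{tr}(\Phi(x,u)\Phi(y,v))=(x\cdot v)(y\cdot u)$ (your wedge/Gram-determinant formula is just this identity antisymmetrized), and then kills all cross terms by the same orthogonality relations you list. One simplification: since $\eta\cdot p\equiv 0$ and $\eta\cdot\eta\equiv 1$ force $\eta'(0)\in T_pM$, the paper writes $\eta'(0)=-A_\eta(X)$ directly with no ambient correction, so your hedging there is unnecessary.
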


\begin{proof}
Let $M$ be as above. Let $p\in M$ and $X,\,Y\in T_{p}M$ and let $\alpha
:(-\varepsilon,\,\varepsilon)\rightarrow M$ be such that $\alpha(0)=p$ and
$\alpha^{\prime}(0)=X$. Set $\mathcal{N}(t)=\mathcal{N}(\alpha(t))$ and
$\eta(t)=\eta(\alpha(t))$. From Proposition $\ref{Propgaussmapsn}$ we have%

\[
\mathcal{N}(t)=\Phi(\eta(t),\alpha(t))-\Phi(\alpha(t),\eta(t)).
\]

\noindent Hence%

\[
d\mathcal{N}_{p}(X)=-\Phi(A_{\eta}(X),p)+\Phi(\eta(p),X)-\Phi(X,\eta
(p))+\Phi(p,A_{\eta}(X)),
\]

\noindent as $\eta^{\prime}(0)=\nabla_{X}\eta= - A_{\eta}(X)$. On the other
hand, we also have $\Gamma_{p}(Y)=\Phi(Y,p)-\Phi(p,Y)$. An useful (and easy to
check) identity concerning $\Phi$ is%

\begin{equation}
\operatorname*{tr}(\Phi(x,u).\Phi(y,v))=(x\cdot v)(y\cdot u), \,\forall
x,\,y,\,u,\,v\, \in\mathds{R}^{4} \label{tracophi}%
\end{equation}

\noindent which implies the identities:
%

%

\[%
\begin{array}
[c]{rclrcl}%
\operatorname*{tr}(\Phi(A_{\eta}(X),p)\Phi(Y,p)) & = & 0 & \langle A_{\eta
}(X),Y\rangle & = & \operatorname*{tr}(\Phi(A_{\eta}(X),p)\Phi(p,Y))\\
\operatorname*{tr}(\Phi(\eta(p),X)\Phi(Y,p)) & = & 0 & 0 & = &
\operatorname*{tr}(\Phi(\eta(p),X)\Phi(p,Y))\\
\operatorname*{tr}(\Phi(X,p)\Phi(Y,p)) & = & 0 & 0 & = & \operatorname*{tr}%
(\Phi(X,\eta(p))\Phi(p,Y))\\
\operatorname*{tr}(\Phi(p,A_{\eta}(X))\Phi(Y,p)) & = & \langle A_{\eta
}(X),Y\rangle & 0 & = & \operatorname*{tr}(\Phi(p,A_{\eta}(X))\Phi(p,Y)) .
\end{array}
\]

\noindent It follows that%

\begin{align*}
\langle d\mathcal{N}_{p}(X),\,\Gamma_{p}(Y)\rangle & = -\frac{1}%
{2}\operatorname*{tr}(d\mathcal{N}_{p}(X)\Gamma_{p}(Y))\\
& = -\langle A_{\eta}(X),\,Y\rangle.
\end{align*}

\end{proof}

An immediate consequence of Proposition \ref{props3} is a generalization of the result for the classical Gauss map, whose derivative coincides - up to a sign - with the shape operator, here it is shown that the projection of $\N^*$ back to the sphere coincides with the shape operator. More precisely, we have:

\begin{corollary}
Let $M$ be a surface in $\mathds{S}^{3}$ oriented with respect to $\eta$ an
unitary vector field normal to $M$ and let $\mathcal{N}:M\rightarrow
\mathds{S}^{5 } \subseteq\mathfrak{o}(4)$ be its Gauss map. Then, for any
$x\in O(4)$ such that $\pi(x)\in M$ it holds%

\[
d\pi_{x}d(R_{x})_{e}d\mathcal{N}_{\pi(x)}=-A_{\eta}.
\]

\end{corollary}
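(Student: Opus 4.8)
The plan is to unwind the statement so that it simply becomes a restatement of Proposition \ref{props3} after moving the maps $d(R_x)_e$ and $d\pi_x$ onto the correct side of the inner product. First I would recall that, by definition (\ref{gammaz}), for $p = \pi(x)$ and any $Y \in T_pM$ one has $\Gamma_p(Y) = d(R_{x^{-1}})_x \ell_x^{-1}(Y)$, hence $\ell_x^{-1}(Y) = d(R_x)_e \Gamma_p(Y)$, and $Y = d\pi_x d(R_x)_e \Gamma_p(Y)$. So the operator $d\pi_x d(R_x)_e$ is a left inverse of $\Gamma_p$ on $T_pM$, and in fact $d\pi_x d(R_x)_e \mathcal{N}(p) = d\pi_x d(R_x)_e \Gamma_p(\eta(p)) = \eta(p)$.

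Next I would apply $d\pi_x d(R_x)_e$ to $d\mathcal{N}_{\pi(x)}(X)$ for $X \in T_pM$ and show the result equals $-A_\eta(X)$. The natural route is to pair against an arbitrary $Y \in T_pM$ using the induced metric on $\mathds{S}^3$: since the projection $\pi$ is a pseudo-Riemannian submersion and right translations are isometries of the bi-invariant metric on $O(4)$, the composite $d\pi_x d(R_x)_e$, restricted to the image of $\Gamma_p$ (equivalently, to horizontal vectors after translating back), is a metric isometry onto $T_pM$ with inverse $\Gamma_p$ composed with the inclusion. Concretely, $\langle d\pi_x d(R_x)_e d\mathcal{N}_p(X), Y \rangle = \langle d\mathcal{N}_p(X), \Gamma_p(Y) \rangle$, provided one first checks that the component of $d\mathcal{N}_p(X)$ orthogonal to the horizontal distribution (equivalently to $\Gamma_p(T_p\mathds{S}^3)$) is killed by $d\pi_x d(R_x)_e$ — which is exactly the statement that $d(R_x)_e$ sends such a component to a vertical vector, whose $d\pi_x$-image vanishes. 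Then Proposition \ref{props3} gives $\langle d\mathcal{N}_p(X), \Gamma_p(Y)\rangle = -\langle A_\eta(X), Y\rangle$, and since $Y \in T_pM$ is arbitrary and the metric on $T_pM$ is nondegenerate, we conclude $d\pi_x d(R_x)_e d\mathcal{N}_{\pi(x)}(X) = -A_\eta(X)$.

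A slightly more hands-on alternative, which avoids discussing orthogonal complements, is to compute directly: from Proposition \ref{Propgaussmapsn}, $d\mathcal{N}_p(X) = -\Phi(A_\eta(X),p) + \Phi(\eta(p),X) - \Phi(X,\eta(p)) + \Phi(p,A_\eta(X))$, and one knows $d(R_x)_e$ acts as right matrix multiplication by $x^{-1} = x^T$ and $d\pi_x$ extracts the first column (as in the computation preceding Proposition \ref{Propgaussmapsn}). One would then verify, using the identity (\ref{tracophi}) for $\Phi$ together with $x x^T = I$, that the first three of the four $\Phi$-terms drop out after applying $d\pi_x d(R_x)_e$ (they lie in the vertical distribution $T_x(xO(3))$) while the last term $\Phi(p,A_\eta(X))$ translates back to $-A_\eta(X)$; this is the same bookkeeping already carried out, with $\mathcal{N}(p)$ in place of $d\mathcal{N}_p(X)$, in the closing computation of Proposition \ref{props3}'s proof, since $\Gamma_p(A_\eta(X)) = \Phi(A_\eta(X),p) - \Phi(p,A_\eta(X))$ and one checks $d\pi_x d(R_x)_e \Gamma_p(A_\eta(X)) = A_\eta(X)$ as noted above.

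I expect the only real subtlety to be making precise that $d\pi_x d(R_x)_e$ annihilates exactly the part of $d\mathcal{N}_p(X)$ that is $\mathfrak{o}(3)$-valued after right-translation (the vertical part), rather than some other complement; everything else is a direct translation of Proposition \ref{props3} through the defining isometries of the construction. Once that identification of vertical versus horizontal is fixed — it follows from $\ell_x$ being a linear isometry between horizontal vectors and $T_{\pi(x)}\mathds{S}^3$, as recorded in Section \ref{pre} — the corollary is immediate.
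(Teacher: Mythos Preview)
Your approach is exactly what the paper intends: the corollary is stated there with no proof, as an immediate consequence of Proposition \ref{props3}, and your argument via the adjunction $\langle d\pi_x d(R_x)_e W,\, Y\rangle = \langle W,\, \Gamma_p(Y)\rangle$ (valid because $R_x$ is an isometry, $\pi$ is a pseudo-Riemannian submersion, and vertical vectors are killed by $d\pi_x$) is the natural way to unpack it.

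There is one small gap. You test the identity only against $Y \in T_pM$, but $d\pi_x d(R_x)_e\, d\mathcal{N}_p(X)$ lands a priori in $T_p\mathds{S}^3$, not in $T_pM$; nondegeneracy of the metric on $T_pM$ therefore pins down only the tangential component. You must also check that the normal component vanishes, i.e.\ that $\langle d\pi_x d(R_x)_e\, d\mathcal{N}_p(X),\, \eta(p)\rangle = 0$. By the same adjunction this equals $\langle d\mathcal{N}_p(X),\, \Gamma_p(\eta(p))\rangle = \langle d\mathcal{N}_p(X),\, \mathcal{N}(p)\rangle$, which is zero because $\mathcal{N}$ takes values in the unit sphere $\mathds{S}^5$. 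With that one extra line the argument is complete. (Minor slip in your hands-on alternative: $d(R_x)_e$ is right multiplication by $x$, not by $x^{-1}$; it is $d(R_{x^{-1}})_x$ that multiplies by $x^{-1}$.)
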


\noindent We then have the following theorem:

\begin{theorem}
\label{thms3} Let $M$ be a surface immersed in $\mathds{S}^{3}$ and let
$\mathcal{N}:M\rightarrow\mathds{S}^{5}\subseteq\mathfrak{o}(4)$ be its Gauss
map. Then the following alternatives are equivalent:

\begin{description}
\item[i.] $M$ has constant mean curvature;

\item[ii.] $\mathcal{N}$ is harmonic;

\item[iii.] The complex quadratic form $\mathcal{Q}_{\mathcal{N}}$ induced by
$\mathcal{N}$ on $M$ is holomorphic.
\end{description}
\end{theorem}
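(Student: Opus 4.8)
The plan is to establish the chain (i) $\Leftrightarrow$ (ii) $\Leftrightarrow$ (iii), where the first equivalence is already available. Indeed, since $\mathds{S}^3$ is a symmetric space of the form $\mathbb{G}/\mathbb{K}$ with $\mathbb{G}=O(4)$, $\mathbb{K}=O(3)$, satisfying the standing assumptions of Section \ref{sec2}, Corollary \ref{cor1} immediately gives (i) $\Leftrightarrow$ (ii). So the real content is the equivalence of these with (iii), the holomorphicity of $\mathcal{Q}_{\mathcal{N}}$.

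For (ii) $\Rightarrow$ (iii): $\mathcal{N}:M\to\mathds{S}^5\subseteq\mathfrak{o}(4)$ is a harmonic map into a (pseudo-)sphere, hence into a space form, and it is classical (see 10.5 of \cite{EL}, as cited in the introduction for the Hopf-theorem sketch) that a harmonic map from a Riemann surface into a space form induces a holomorphic quadratic differential, namely the $(2,0)$-part of the pullback metric $\langle\mathcal{N}^*,\mathcal{N}^*\rangle$. Here one must check that $\mathcal{Q}_{\mathcal{N}}=\langle\mathcal{N}_z,\Gamma(F_z)\rangle\,dz^2$ as defined in (\ref{QN}) is precisely (a constant multiple of) that $(2,0)$-part. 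This is where Proposition \ref{props3} enters: it identifies $\langle d\mathcal{N}_p(X),\Gamma_p(Y)\rangle=-\langle A_\eta(X),Y\rangle$, so that $\mathcal{Q}_{\mathcal{N}}=-\mathcal{A}$, the negative of the Hopf differential, which is symmetric in $X,Y$; hence $\langle\mathcal{N}_z,\Gamma(F_z)\rangle=\langle\mathcal{N}_z,\mathcal{N}_z\rangle$ up to the identification of $\Gamma(F_z)$ with a lift of $\mathcal{N}_z$, i.e. $\mathcal{Q}_{\mathcal{N}}$ is genuinely the $(2,0)$-part of the induced metric. The alternative, more self-contained route is to compute $\partial_{\bar z}\langle\mathcal{N}_z,\Gamma(F_z)\rangle$ directly, use the tension-field equation for $\mathcal{N}$ (which by Theorem \ref{laplacianoN} has no $\operatorname{grad}H$ term exactly when $H$ is constant, and whose remaining term is normal to $\mathds{S}^5$ hence pairs to zero against the tangential $\Gamma(F_z)$), together with the fact that $\langle\mathcal{N}_z,\mathcal{N}_z\rangle$ vanishes for a conformal-type reason, to conclude $\partial_{\bar z}\mathcal{Q}_{\mathcal{N}}=0$.

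For (iii) $\Rightarrow$ (i): by Proposition \ref{props3}, $\mathcal{Q}_{\mathcal{N}}=-\mathcal{A}$ where $\mathcal{A}=\langle A_\eta(F_z),F_z\rangle\,dz^2$ is the Hopf differential. A standard computation (Codazzi equation in $\mathds{S}^3$, exactly as in Hopf's original argument, valid since $\mathds{S}^3$ has constant curvature and the curvature term in Codazzi is symmetric) shows $\partial_{\bar z}\langle A_\eta(F_z),F_z\rangle = E\,\partial_z H$ up to a nonzero constant; so holomorphicity of $\mathcal{Q}_{\mathcal{N}}$ forces $\partial_z H\equiv0$ on each chart, hence $H$ is constant. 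One should remark that this uses connectedness of $M$.

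The main obstacle is the careful identification in (ii) $\Rightarrow$ (iii): one must verify that the abstract "harmonic map induces holomorphic quadratic differential" mechanism applies to the target $\mathds{S}^{5}\subseteq\mathfrak{o}(4)$ with its (possibly indefinite-origin but, on the relevant sphere, Riemannian) metric, and that the quadratic form it produces is the one in (\ref{QN}) rather than some other pairing. Having Proposition \ref{props3} in hand largely dispatches this, since it pins $\mathcal{Q}_{\mathcal{N}}$ to $-\mathcal{A}$, and the holomorphicity of $\mathcal{A}$ under the CMC hypothesis is the classical fact for space forms; thus in fact the cleanest write-up may bypass the general harmonic-map theory entirely and run everything through Proposition \ref{props3} plus the Codazzi equation, proving (i) $\Leftrightarrow$ (iii) directly and invoking Corollary \ref{cor1} for (i) $\Leftrightarrow$ (ii).
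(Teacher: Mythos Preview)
Your proposal is correct, and the final cleanup you suggest---using Proposition \ref{props3} to identify $\mathcal{Q}_{\mathcal{N}}$ with $-\mathcal{A}$, then invoking the classical fact (for which the paper simply cites \cite{Ch}) that the Hopf differential in a space form is holomorphic iff the surface is CMC, together with Corollary \ref{cor1} for (i)$\Leftrightarrow$(ii)---is exactly the paper's proof. Your exploratory discussion of the general harmonic-map-into-a-sphere route is supplementary and not needed, and indeed the identification ``$\Gamma(F_z)$ with a lift of $\mathcal{N}_z$'' you mention there is not quite right, so it is best dropped in favor of the clean argument you land on.
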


\begin{proof}
Let $F:U\subseteq\mathds{C} \rightarrow M$ be a conformal structure on a
neighborhood of a point $p \in M$. The complex quadratic form induced by
$\mathcal{N}$ at $p$ is given by $\mathcal{Q}_{\mathcal{N}}(p) =
\langle\mathcal{N}_{z},\,\Gamma_{p}(F_{z}) \rangle dz^{2}$.

It follows from Proposition $\ref{props3}$ that $\mathcal{Q}_{\mathcal{N}}$
coincides (up to a sign) with the Hopf differential $\mathcal{A}$ of $M$ on
$\mathds{S}^{3}$. Therefore, $\mathcal{Q}_{\mathcal{N}}$ is holomorphic if and
only if $M$ has constant mean curvature \cite{Ch}. The equivalence between CMC
and harmonicity of the Gauss map had already been obtained in the more general
case of Corollary $\ref{cor1}$. This proves the theorem.
\end{proof}

\subsection{The quadratic form on $\mathds{H}^{3}$}

Following the steps of the last section, we first relate the derivative of the
Gauss map $\mathcal{N}$ with the shape operator of $M$. Then we obtain that
the complex quadratic form induced by $\mathcal{N}$, $\mathcal{Q}%
_{\mathcal{N}},$ coincides with the Hopf differential $\mathcal{A}$ of $M$.

\begin{proposition}
Let $M$ be an orientable surface in $\mathds{H}^{3}$ oriented by a normal
unitary vector field $\eta$ and let $\mathcal{N}:M\rightarrow\mathds{S}^{5}%
\subseteq\mathfrak{o}(1,3)$ be its Gauss map. Then for any $p\in M$ and
$X,\,Y\in T_{p}M$ it holds%

\[
\langle d\mathcal{N}_{p}(X),\,\Gamma_{p}(Y)\rangle=-\langle A_{\eta
}(X),\,Y\rangle.
\]

\end{proposition}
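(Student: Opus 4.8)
The plan is to mirror exactly the argument given in Proposition \ref{props3} for the spherical case, replacing the map $\Phi$ by $\Psi$ and the Euclidean inner product by the Lorentzian product $(\,\ast\,)$, and the group $O(4)$ by $O(1,3)$. First I would take $p\in M$ and $X,\,Y\in T_pM$, choose a curve $\alpha$ in $M$ with $\alpha(0)=p$, $\alpha'(0)=X$, set $\mathcal{N}(t)=\mathcal{N}(\alpha(t))$, $\eta(t)=\eta(\alpha(t))$, and use Proposition \ref{Propgausshn} to write $\mathcal{N}(t)=\Psi(\alpha(t),\eta(t))-\Psi(\eta(t),\alpha(t))$. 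Differentiating at $t=0$, and using $\eta'(0)=\nabla_X\eta=-A_\eta(X)$ (the shape operator identity in $\mathds{H}^n$), gives
\[
d\mathcal{N}_p(X)=\Psi(X,\eta(p))-\Psi(p,A_\eta(X))-\Psi(\eta(p),X)+\Psi(A_\eta(X),p).
\]
On the other hand $\Gamma_p(Y)=\Psi(p,Y)-\Psi(Y,p)$ by \eqref{gammaphn}.

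The key computational step is the Lorentzian analogue of the trace identity \eqref{tracophi}: I expect that
\[
\operatorname{tr}\bigl(\Psi(x,u)\,\Psi(y,v)\bigr)=(x\ast v)(y\ast u)
\]
for all $x,\,y,\,u,\,v\in\mathds{L}^{n+1}$, which should follow by a direct entrywise computation from the definition \eqref{psi} of $\Psi$ (the sign changes coming from the first row/column are precisely absorbed by the $\xi_i$, reproducing $(\,\ast\,)$). Granting this identity, I would expand $\operatorname{tr}\bigl(d\mathcal{N}_p(X)\,\Gamma_p(Y)\bigr)$ into sixteen trace terms $\operatorname{tr}(\Psi(\cdot,\cdot)\Psi(\cdot,\cdot))$ and evaluate each using the identity together with the orthogonality relations available at $p\in M\subset\mathds{H}^3$: namely $p\ast p=-1$, $\eta(p)\ast\eta(p)=1$, $p\ast\eta(p)=0$, $p\ast X=p\ast Y=0$ (since $X,Y$ are tangent to $\mathds{H}^3$ at $p$, hence $\ast$-orthogonal to $p$), $\eta(p)\ast X=\eta(p)\ast Y=0$ (since $\eta$ is normal to $M$ and $X,Y$ are tangent to $M$), and $A_\eta(X)\ast Y=\langle A_\eta(X),Y\rangle$. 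Most terms vanish; the surviving ones should combine to give $\operatorname{tr}\bigl(d\mathcal{N}_p(X)\,\Gamma_p(Y)\bigr)=2\langle A_\eta(X),Y\rangle$, and since the bi-invariant metric on $\mathfrak{o}(1,n)$ is $\langle u,v\rangle=\tfrac12\operatorname{tr}(uv)$, this yields $\langle d\mathcal{N}_p(X),\,\Gamma_p(Y)\rangle=-\langle A_\eta(X),Y\rangle$ after checking the overall sign (the metric on $\mathfrak{o}(1,n)$ carries a $+\tfrac12\operatorname{tr}(uv)$ rather than the $-\tfrac12\operatorname{tr}(uv)$ used in the compact case, so I would be careful that the final sign still comes out as claimed, consistent with the spherical statement).

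The main obstacle I anticipate is purely bookkeeping: getting every sign right in the sixteen-term expansion, because unlike the spherical case there are two sources of signs — the Lorentzian product $(\,\ast\,)$ hidden inside the trace identity for $\Psi$, and the opposite overall sign convention in the bi-invariant metric on $O(1,n)$ versus $O(n+1)$. I would organize the computation by tabulating, exactly as in the proof of Proposition \ref{props3}, which of the products $\operatorname{tr}(\Psi(\cdot,\cdot)\Psi(p,Y))$ and $\operatorname{tr}(\Psi(\cdot,\cdot)\Psi(Y,p))$ vanish and which equal $\pm\langle A_\eta(X),Y\rangle$, using only the orthogonality relations listed above. Once the trace identity for $\Psi$ is established, the rest is a routine verification of the same shape as the $\mathds{S}^3$ argument, and I would present it in the same abbreviated style, writing down the analogue of the trace table and concluding
\[
\langle d\mathcal{N}_p(X),\,\Gamma_p(Y)\rangle=\tfrac12\operatorname{tr}\bigl(d\mathcal{N}_p(X)\,\Gamma_p(Y)\bigr)=-\langle A_\eta(X),\,Y\rangle .
\]
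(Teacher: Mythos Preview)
Your proposal is correct and follows exactly the route taken in the paper: the authors' proof consists of a single remark that the argument is analogous to Proposition~\ref{props3}, using $(p\ast p)=-1$ in place of $(p\cdot p)=1$ and the trace identity $\operatorname{tr}(\Psi(x,u)\Psi(y,v))=(x\ast v)(y\ast u)$ in place of~\eqref{tracophi}. Your anticipation of the sign bookkeeping is apt --- the two sign changes (from $p\ast p=-1$ in the trace table and from $\langle u,v\rangle=+\tfrac12\operatorname{tr}(uv)$ on $\mathfrak{o}(1,n)$) do cancel to give the stated minus sign, so the trace comes out as $-2\langle A_\eta(X),Y\rangle$ rather than $+2\langle A_\eta(X),Y\rangle$.
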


\begin{proof}
The proof to this proposition is analogous to the proof of Proposition
$\ref{props3}$, with the only difference that here one uses $(p\ast p) = -1$
and the equation%

\begin{equation}
\operatorname*{tr}(\Psi(x,u)\Psi(y,v))=(x\ast v)(y\ast u) \label{tracopsi}%
\end{equation}

\noindent instead of ($\ref{tracophi}$).
\end{proof}

As a consequence, similarly to the spherical case, we obtain:

\begin{corollary}
Let $M$ be an orientable surface in $\mathds{H}^{3}$ oriented by an unitary
vector field $\eta$ normal to $M$ and let $\mathcal{N}:M\rightarrow
\mathds{S}^{5 } \subseteq\mathfrak{o}(1,3)$ be its Gauss map. Then, for any
$x\in O(1,3)$ such that $\pi(x)\in M$ it holds%

\[
d\pi_{x}d(R_{x})_{e}d\mathcal{N}_{\pi(x)}=-A_{\eta}.
\]

\end{corollary}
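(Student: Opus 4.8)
The statement to prove is the corollary that for a surface $M$ in $\mathds{H}^{3}$ oriented by $\eta$, one has $d\pi_{x}d(R_{x})_{e}d\mathcal{N}_{\pi(x)}=-A_{\eta}$ for any $x\in O(1,3)$ with $\pi(x)\in M$. The plan is to deduce this directly from the immediately preceding proposition, exactly as was done in the spherical case after Proposition \ref{props3}.

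First I would recall that the preceding proposition gives $\langle d\mathcal{N}_{p}(X),\,\Gamma_{p}(Y)\rangle=-\langle A_{\eta}(X),\,Y\rangle$ for all $X,Y\in T_{p}M$, where $p=\pi(x)$. The key observation is that the operator $d\pi_{x}\circ d(R_{x})_{e}$ is, by the very definition of $\Gamma_{p}$ in (\ref{gammaz}), essentially the inverse correspondence to $\Gamma_{p}$: applying $d(R_{x})_{e}$ followed by $d\pi_{x}$ to $\Gamma_{p}(Y)=d(R_{x^{-1}})_{x}\ell_{x}^{-1}(Y)$ recovers $d\pi_{x}\ell_{x}^{-1}(Y)=Y$. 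More precisely, for $w\in\mathfrak{g}$ one has $d\pi_{x}d(R_{x})_{e}(w)=\ell_{x}\big((d(R_{x})_{e}w)^{h}\big)$, the projection picking out the horizontal part; and the adjointness/isometry properties established for $\Gamma_{p}$ (Proposition preceding (\ref{gaussmap}): $\Gamma_{p}$ preserves the metric) together with the fact that $\pi$ is a pseudo-Riemannian submersion give that $d\pi_{x}d(R_{x})_{e}$ is a left inverse of $\Gamma_{p}$ on the image, and is metric-preserving onto $T_{p}M$ when restricted appropriately.

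The main step is then the following identity-chasing. Write $T:=d\pi_{x}d(R_{x})_{e}d\mathcal{N}_{\pi(x)}:T_{p}M\to T_{p}N$; I want to show $T=-A_{\eta}$ as an endomorphism of $T_{p}M$. For any $X,Y\in T_{p}M$, compute $\langle T(X),Y\rangle$. Using that $d\pi_{x}d(R_{x})_{e}$ and $\Gamma_{p}$ are mutually inverse isometries between horizontal vectors and $T_{p}N$ (combined with the metric-preservation of $\Gamma_{p}$), one gets $\langle d\pi_{x}d(R_{x})_{e}(d\mathcal{N}_{p}(X)),\,Y\rangle=\langle d\mathcal{N}_{p}(X),\,\Gamma_{p}(Y)\rangle$. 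By the preceding proposition this equals $-\langle A_{\eta}(X),Y\rangle$. Since this holds for all $Y\in T_{p}M$ and both $T(X)$ and $A_{\eta}(X)$ lie in $T_{p}M$ (one should note $T(X)\in T_{p}M$ because $\mathcal{N}$ takes values in the unit pseudo-sphere so $\langle d\mathcal{N}_p(X),\mathcal{N}(p)\rangle=0$, and $\Gamma_p$ carries $T_pM$ isometrically to a subspace whose orthocomplement against $\mathcal{N}(p)=\Gamma_p(\eta)$ corresponds to $\eta^\perp=T_pM$), non-degeneracy of the metric on $T_{p}M$ forces $T(X)=-A_{\eta}(X)$.

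The hard part, such as it is, is not the calculation but making rigorous the claim that $d\pi_{x}d(R_{x})_{e}$ restricted to the relevant subspace is exactly the inverse of $\Gamma_{p}$ as an isometry, so that testing against $Y$ and converting to $\langle d\mathcal{N}_p(X),\Gamma_p(Y)\rangle$ is legitimate, and checking that $d\mathcal{N}_p(X)$ indeed has the horizontality needed for the projection formula to produce an honest tangent vector to $M$ rather than something with a spurious normal or vertical component; this uses $\langle \mathcal{N},\mathcal{N}\rangle\equiv 1$ and $\mathcal{N}(p)=\Gamma_p(\eta(p))$. Once that is in place the conclusion is immediate, and I expect the authors' proof to be a one-line reference back to the preceding proposition together with these structural remarks on $\Gamma_{p}$.
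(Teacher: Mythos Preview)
Your argument is correct and is exactly the approach implicit in the paper: the authors give no separate proof for this corollary (either here or in the analogous spherical case), treating it as an immediate consequence of the preceding proposition together with the structural identity $\langle w,\Gamma_p(Y)\rangle=\langle d\pi_x d(R_x)_e w,\,Y\rangle$ that you have spelled out. Your careful check that $T(X)\in T_pM$ via $\langle d\mathcal{N}_p(X),\mathcal{N}(p)\rangle=0$ is the only nontrivial point, and you handle it correctly.
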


Observing that the quadratic form induced by $\mathcal{N}$ coincides with the
Hopf differential $\mathcal{A}$, we obtain an analogous of Theorem
$\ref{thms3}$ to the hyperbolic space:

\begin{theorem}
\label{equivalh3} Let $M$ be a surface immersed in $\mathds{H}^{3}$ and let
$\mathcal{N}:M\rightarrow\mathds{S}^{5}\subseteq\mathfrak{o}(1,3)$ be its
Gauss map. Then the following alternatives are equivalent:

\begin{description}
\item[i.] $M$ has constant mean curvature;

\item[ii.] $\mathcal{N}$ is harmonic;

\item[iii.] The complex quadratic form $\mathcal{Q}_{\mathcal{N}}$ induced by
$\mathcal{N}$ on $M$ is holomorphic.
\end{description}
\end{theorem}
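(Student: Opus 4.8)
The plan is to mirror the structure already used for Theorem \ref{thms3} (the $\mathds{S}^3$ case), since all the pieces have been assembled immediately before the statement. The proof is essentially a bookkeeping argument chaining together three facts: first, the equivalence of (i) and (ii) is nothing but a special case of Corollary \ref{cor1}, applied with $N = \mathds{H}^3 = O(1,3)/O(3)$; since $\dim \mathfrak{o}(1,3) = 6$, the Gauss map indeed takes values in $\mathds{S}^5 \subseteq \mathfrak{o}(1,3)$, so the setup is consistent. Nothing new is needed here beyond citing Corollary \ref{cor1}.

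Next I would establish the equivalence of (iii) with (i). The key input is the proposition just proved, namely $\langle d\mathcal{N}_p(X),\,\Gamma_p(Y)\rangle = -\langle A_\eta(X),\,Y\rangle$ for all $X,Y \in T_pM$. Plugging $X = Y = F_z$ into the definition (\ref{QN}) of $\mathcal{Q}_{\mathcal{N}}$ gives $\mathcal{Q}_{\mathcal{N}} = \langle \mathcal{N}_z,\,\Gamma(F_z)\rangle\,dz^2 = -\langle A_\eta(F_z),\,F_z\rangle\,dz^2 = -\mathcal{A}$, so $\mathcal{Q}_{\mathcal{N}}$ coincides up to sign with the Hopf differential $\mathcal{A}$ of $M$ in $\mathds{H}^3$. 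Then one invokes the classical fact (the result of \cite{Ch}, already cited in this section) that in $\mathds{H}^3$ the Hopf differential $\mathcal{A}$ is holomorphic if and only if $M$ has constant mean curvature. Since holomorphy is unaffected by multiplication by the constant $-1$, $\mathcal{Q}_{\mathcal{N}}$ is holomorphic iff $\mathcal{A}$ is, iff $M$ has CMC. Combining the two equivalences closes the cycle (i) $\Leftrightarrow$ (ii) $\Leftrightarrow$ (iii).

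There is essentially no obstacle here: the theorem is a corollary packaging of Corollary \ref{cor1}, the preceding proposition, and the classical Hopf-type theorem in space forms. The only point requiring a word of care is making sure the identification $\mathcal{Q}_{\mathcal{N}} = -\mathcal{A}$ is read off correctly from the definition (\ref{QN}) with the $2F_z = F_x - iF_y$ convention, but this is immediate once the proposition is in hand, exactly as in the proof of Theorem \ref{thms3}. So the "proof" I would write is short:

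\begin{proof}
Let $F:U\subseteq\mathds{C}\rightarrow M$ be a conformal structure near a point $p\in M$. By definition, $\mathcal{Q}_{\mathcal{N}}(p)=\langle\mathcal{N}_{z},\,\Gamma_{p}(F_{z})\rangle\,dz^{2}$. By the previous proposition, $\langle d\mathcal{N}_{p}(F_{z}),\,\Gamma_{p}(F_{z})\rangle=-\langle A_{\eta}(F_{z}),\,F_{z}\rangle$, hence $\mathcal{Q}_{\mathcal{N}}=-\mathcal{A}$, the Hopf differential of $M$ in $\mathds{H}^{3}$. By \cite{Ch}, $\mathcal{A}$ is holomorphic if and only if $M$ has constant mean curvature; since $\mathcal{Q}_{\mathcal{N}}$ differs from $\mathcal{A}$ only by the sign, the same holds for $\mathcal{Q}_{\mathcal{N}}$. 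This gives the equivalence of \textbf{i.} and \textbf{iii.}. The equivalence of \textbf{i.} and \textbf{ii.} is the special case $N=\mathds{H}^{3}=O(1,3)/O(3)$ of Corollary \ref{cor1}. This proves the theorem.
\end{proof}
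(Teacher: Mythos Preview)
Your proposal is correct and follows essentially the same approach as the paper: the paper states Theorem \ref{equivalh3} as the direct analogue of Theorem \ref{thms3}, relying on the preceding proposition to identify $\mathcal{Q}_{\mathcal{N}}$ (up to sign) with the Hopf differential, then invoking \cite{Ch} for (i)$\Leftrightarrow$(iii) and Corollary \ref{cor1} for (i)$\Leftrightarrow$(ii). Your write-up matches this exactly.
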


\subsection{The quadratic form on $\mathds{H}^{2}\times\mathds{R}$ and on $\mathds{S}^{2}\times\mathds{R}$}

In this section we prove a result analogous to Theorems $\ref{thms3}$ and $\ref{equivalh3}$ for a surface $M$ immersed in a product space $\sn2\times\R$ or $\hn2\times \R$. We will prove that if $M$ has constant mean curvature, then the quadratic form induced by $\N$ is holomorphic. In order to prove this result we will show that the complex quadratic form induced by the Gauss map of $M$ coincides with the Abresch-Rosenberg quadratic form. When $N = \sn2\times\R$, our construction of the Gauss map coincides with the one in \cite{BR}, therefore Theorem 3.1 of \cite{LR} shows this result. Thus, we focus when $M$ is a surface immersed in $\mathds{H}^{2}\times\mathds{R}$, and we relate $\N$ with the \emph{twisted normal map} of $M$, introduced in \cite{LR}.

For an orientable surface $M$ in $\mathds{H}^{2}\times\mathds{R}$ oriented
with a vector field $(\eta,\,\nu)$ normal to $M$, the twisted normal map of
$M$ is defined by (see \cite{LR}):
\begin{equation}%
\begin{array}
[c]{rccl}%
N: & M & \rightarrow & d\mathds{S}^{3}\subseteq\mathds{L}^{3}\times
\mathds{R}\\
& (p,\,t) & \mapsto & (J(\eta(p)),\,\nu),
\end{array}
\label{gaussLR}%
\end{equation}
where $J$ is the operator acting on tangent planes of $\mathds{H}^{2}$ as the
clockwise $\pi/2$ rotation. Next proposition shows that if $p \in \hn2$, then $\Gamma_p = J$.

\begin{proposition}
\label{prop7} Let $p\in\mathds{H}^{2}$ and let $v\in T_{p}\mathds{H}^{2}%
\subseteq\mathds{L}^{3}$. Let $\{v_{2},\,v_{3}\}$ be an orthogonal basis of
$T_{p}\mathds{H}^{2}$. If $u=av_{2}+bv_{3}$, then $\Gamma_{p}(u)=-bv_{2}%
+av_{3}$, via the identification
\[
\left(
\begin{array}
[c]{ccc}%
0 & -r & s\\
-r & 0 & -t\\
s & t & 0
\end{array}
\right) \in\mathfrak{o}(1,2)\leftrightarrow(t,\,s,\,r)\in\mathds{L}^{3}.
\]

\end{proposition}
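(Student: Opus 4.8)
The plan is to mimic, in the lowest-dimensional case $n=2$, the explicit matrix computation already carried out in the Lemma preceding Proposition \ref{Propgausshn}, and then to read off the $\mathfrak{o}(1,2)$-matrix $\Gamma_p(u)$ through the stated identification. First I would fix $p\in\hn2\subseteq\Ln3$ and an orthonormal basis $\{v_2,v_3\}$ of $T_p\hn2$ chosen so that $x:=(p\ v_2\ v_3)\in O(1,2)$; this is exactly the frame used in the Lemma. Writing $u=av_2+bv_3$, so that $(u\ast v_2)=a$ and $(u\ast v_3)=b$ (using that $\{v_2,v_3\}$ is orthonormal for $(\ast)$ and spacelike), the Lemma gives $\Gamma_p(u)=xZx^{-1}=\Psi(p,u)-\Psi(u,p)$, where
\[
Z=\left(\begin{array}{ccc} 0 & a & b\\ a & 0 & 0\\ b & 0 & 0\end{array}\right)\in\mathfrak{o}(2)^{\perp}.
\]
So in principle the formula $\Gamma_p(u)=\Psi(p,u)-\Psi(u,p)$ from \eqref{gammaphn} already does the work; what remains is to express this matrix intrinsically in terms of the basis $\{v_2,v_3\}$ rather than in the ambient coordinates of $\Ln3$.

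The key step is therefore the translation. I would observe that $\Gamma_p$ is a linear isometry from $T_p\hn2$ into $\mathfrak{o}(1,2)$, and that $\mathfrak{o}(1,2)$, equipped with the bi-invariant metric $\langle u,v\rangle=\tfrac12\operatorname{tr}(uv)$, is via the stated identification isometric (up to sign conventions) to $\Ln3$ itself; moreover the image of $\Gamma_p$ is precisely the orthogonal complement of the line spanned by the vector corresponding to $p$ — this is the $\mathfrak{o}(2)^{\perp}$-type statement, and it identifies $\Gamma_p(T_p\hn2)$ with $T_p\hn2\subseteq\Ln3$ itself. Under this identification $\Gamma_p$ becomes a metric-preserving endomorphism of the $2$-plane $T_p\hn2$, hence a rotation or a reflection; evaluating it on the basis (equivalently, computing $xZx^{-1}$ and matching entries against the identification $\begin{psmallmatrix}0&-r&s\\-r&0&-t\\s&t&0\end{psmallmatrix}\leftrightarrow(t,s,r)$) shows $\Gamma_p(v_2)=v_3$ and $\Gamma_p(v_3)=-v_2$, i.e. $\Gamma_p(av_2+bv_3)=-bv_2+av_3$, which is the clockwise $\pi/2$ rotation $J$.

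The main obstacle I anticipate is purely bookkeeping: getting the sign conventions consistent between (i) the definition of $J$ as the \emph{clockwise} quarter-turn, (ii) the choice of orientation on the frame $(p\ v_2\ v_3)$ forced by requiring it to lie in $O(1,2)$ rather than merely $O(1,2)$'s identity component, and (iii) the particular identification $\mathfrak{o}(1,2)\leftrightarrow\Ln3$ written in the statement (note the asymmetric placement of minus signs, reflecting $u_{ij}=-\xi_i\xi_j u_{ji}$). Concretely, one must check that the matrix $xZx^{-1}=\Psi(p,u)-\Psi(u,p)$, when its off-diagonal entries are matched to the triple $(t,s,r)$, yields exactly $-bv_2+av_3$ and not its negative; this amounts to verifying a single $3\times3$ matrix identity, which I would do by direct expansion using that the columns of $x$ are $p,v_2,v_3$ and that $x^{-1}=\widetilde I x^T\widetilde I$. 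Once that sign is pinned down the proposition follows immediately, and in particular it shows that the Gauss map $\N$ of Proposition \ref{Propgausshn}, restricted to the $\hn2$-factor, coincides with the twisted normal map \eqref{gaussLR}, as asserted in the paragraph preceding the proposition.
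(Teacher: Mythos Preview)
Your proposal is correct and follows essentially the same route as the paper: both use the formula $\Gamma_p(u)=\Psi(p,u)-\Psi(u,p)$ from the preceding Lemma, substitute $u=av_2+bv_3$ in coordinates, and then read off the result through the stated identification $\mathfrak{o}(1,2)\leftrightarrow\Ln3$ (the paper does this in one shot by splitting the resulting matrix as $a\cdot\Gamma_p(v_2)+b\cdot\Gamma_p(v_3)$ and recognizing each piece via the cofactor-type identities coming from $(p\ v_2\ v_3)\in O(1,2)$). Your extra observation that $\Gamma_p$ is a priori a plane isometry, hence a rotation or reflection, is not in the paper but is a helpful sanity check; and you are right that the only delicate point is the orientation/sign bookkeeping, which the paper handles (implicitly) by the same direct expansion you propose.
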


\noindent\textbf{Remark. }As in $\mathds{H}^{2}\times\mathds{R}$ we have
$\Gamma_{(p,\,t)}(u,\,\nu) = (\Gamma_{p}(u),\,\nu)$, Proposition $\ref{prop7}$
shows that the Gauss map given by the expression ($\ref{gaussmap}$) coincides
with the twisted normal map defined by ($\ref{gaussLR}$).

\bigskip

\begin{proof}
Let $p=(x_{1},\,x_{2},\,x_{3})\in\mathds{H}^{2}$ and $u=(u_{1},\,u_{2}%
,\,u_{3})\in T_{p}\mathds{H}^{2}$. Then, by equation ($\ref{gammaphn}$), it
follows that%

\[
\Gamma_{p}(u)=\left(
\begin{array}
[c]{ccc}%
0 & u_{2}x_{1}-u_{1}x_{2} & u_{3}x_{1}-u_{1}x_{3}\\
u_{2}x_{1}-u_{1}x_{2} & 0 & u_{3}x_{2}-u_{2}x_{3}\\
u_{3}x_{1}-u_{1}x_{3} & u_{2}x_{3}-u_{3}x_{2} & 0
\end{array}
\right)
\begin{array}
[c]{c}%
\\
\\
.
\end{array}
\]

Writing $v_{j}=(v_{1j},\,v_{2j},\,v_{3j})$ and making the substitution
$u_{i}=av_{i2}+bv_{i3}$ on the previous equality it becomes%

\begin{align*}
\Gamma_{p}(u) & =a\left(
\begin{array}
[c]{ccc}%
0 & v_{22}x_{1}-v_{12}x_{2} & v_{32}x_{1}-v_{12}x_{3}\\
v_{22}x_{1}-v_{12}x_{2} & 0 & v_{32}x_{2}-v_{22}x_{3}\\
v_{32}x_{1}-v_{12}x_{3} & v_{22}x_{3}-v_{32}x_{2} & 0
\end{array}
\right) \\
& \\
& +b\left(
\begin{array}
[c]{ccc}%
0 & v_{23}x_{1}-v_{13}x_{2} & v_{33}x_{1}-v_{13}x_{3}\\
v_{23}x_{1}-v_{13}x_{2} & 0 & v_{33}x_{2}-v_{23}x_{3}\\
v_{33}x_{1}-v_{13}x_{3} & v_{23}x_{3}-v_{33}x_{2} & 0
\end{array}
\right) \\
& \\
& =a\left(
\begin{array}
[c]{ccc}%
0 & -v_{33} & v_{23}\\
-v_{33} & 0 & -v_{13}\\
v_{23} & v_{13} & 0
\end{array}
\right) +b\left(
\begin{array}
[c]{ccc}%
0 & v_{32} & -v_{22}\\
v_{32} & 0 & v_{12}\\
-v_{22} & -v_{12} & 0
\end{array}
\right) \\
& =av_{3}-bv_{2}.
\end{align*}

\end{proof}

We then obtain

\begin{corollary}
If $N = \hnr$, then the Gauss map defined by \emph{($\ref{gaussmap}$)} coincides with the twisted normal map of \cite{LR} given by \emph{($\ref{gaussLR}$)}.
\end{corollary}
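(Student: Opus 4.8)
The plan is to assemble the corollary from the two ingredients already in hand: Proposition~\ref{prop7}, which identifies $\Gamma_p$ on $\mathds{H}^2$ with the clockwise $\pi/2$ rotation $J$ under the stated identification $\mathfrak{o}(1,2)\leftrightarrow\mathds{L}^3$, and the product structure of the Gauss map. First I would note that the ambient group for $\hnr$ is $\G = O(1,2)\times\mathds{R}$ (with $\K = O(2)\times\{0\}$), so its Lie algebra splits as $\mathfrak{o}(1,2)\oplus\mathds{R}$, and the horizontal-lift construction respects this splitting because the metric is a product metric and the two factors are orthogonal. Hence for a tangent vector $(u,\nu)\in T_{(p,t)}(\hnr)$ one has $\Gamma_{(p,t)}(u,\nu) = (\Gamma_p(u),\,\nu)$, exactly the identity invoked in the Remark preceding the proof of Proposition~\ref{prop7}. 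I would spell this product decomposition out as the first step, since it is the only genuinely new observation; everything after it is bookkeeping.

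Second, I would apply this to the unit normal field. For an oriented surface $M\subset\hnr$ with unit normal $(\eta,\nu)$ (where $\eta$ is the horizontal part, $\nu = \langle$normal$,\partial_t\rangle$), the Gauss map of definition~(\ref{gaussmap}) is $\mathcal{N}(p,t) = \Gamma_{(p,t)}(\eta(p),\nu) = (\Gamma_p(\eta(p)),\,\nu)$. By Proposition~\ref{prop7}, $\Gamma_p(\eta(p)) = J(\eta(p))$ under the identification of $\mathfrak{o}(1,2)$ with $\mathds{L}^3$. Therefore $\mathcal{N}(p,t) = (J(\eta(p)),\,\nu)$, which is precisely the twisted normal map~(\ref{gaussLR}) of \cite{LR}, and one checks that $(J(\eta(p)),\nu)$ indeed lands in $d\mathds{S}^3\subseteq\mathds{L}^3\times\mathds{R}$ since $J$ preserves the Lorentzian norm of the spacelike vector $\eta$ and $|\eta|^2+\nu^2 = 1$. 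That completes the identification.

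The only place requiring care — and the step I expect to be the main (minor) obstacle — is justifying that the horizontal lift decomposes as a direct sum over the two factors, i.e. that $\ell_x^{-1}$ and hence $\Gamma$ are block-diagonal with respect to $\mathfrak{o}(1,2)\oplus\mathds{R}$. This is where one must use that $\G = O(1,2)\times\mathds{R}$ with the \emph{product} bi-invariant pseudo-Riemannian metric and $\K = O(2)\times\{0\}$, so that $T_x(x\K)$ sits entirely inside the first factor of $T_x\G \cong \mathfrak{o}(1,2)\oplus\mathds{R}$, its orthogonal complement is (horizontal part of first factor)$\oplus\mathds{R}$, and right translation by $x^{-1} = (x_0^{-1},-t_0)$ acts factorwise; putting these together gives $\Gamma_{(p,t)}(u,\nu) = (\Gamma_p(u),\nu)$. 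Once this is established the corollary is immediate. I would therefore write the proof as: (1) record the group/algebra product splitting and the block-diagonality of $\Gamma$; (2) invoke Proposition~\ref{prop7} on the $\mathds{H}^2$ factor; (3) conclude $\mathcal{N}(p,t) = (J(\eta(p)),\nu)$, matching~(\ref{gaussLR}).

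\begin{proof}
Write $\G = O(1,2)\times\mathds{R}$ and $\K = O(2)\times\{0\}$, so that $\hnr = \gk$ as in Section~\ref{sec2}, and $\mathfrak{g} = \mathfrak{o}(1,2)\oplus\mathds{R}$. Since the bi-invariant metric on $\G$ is the product of the bi-invariant metric on $O(1,2)$ and the standard metric on $\mathds{R}$, the two factors are orthogonal, and for $x = (x_0,t_0)\in\G$ the vertical space $T_x(x\K)$ lies in the first factor. Consequently its orthogonal complement, and the restriction of $d\pi_x$ to it, split over the two factors; together with the fact that $R_{x^{-1}}$ acts factorwise, this yields
\[
\Gamma_{(p,t)}(u,\,\nu) = (\Gamma_p(u),\,\nu), \qquad (u,\nu)\in T_{(p,t)}(\hnr).
\]
Now let $M\subseteq\hnr$ be oriented with unit normal $(\eta,\nu)$, $\eta$ tangent to the $\mathds{H}^2$ factor and $\nu = \langle(\eta,\nu),\partial_t\rangle$. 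By~(\ref{gaussmap}),
\[
\mathcal{N}(p,t) = \Gamma_{(p,t)}(\eta(p),\nu) = (\Gamma_p(\eta(p)),\,\nu).
\]
By Proposition~\ref{prop7}, under the identification $\mathfrak{o}(1,2)\leftrightarrow\mathds{L}^3$ one has $\Gamma_p(\eta(p)) = J(\eta(p))$, where $J$ is the clockwise $\pi/2$ rotation on $T_p\mathds{H}^2$. Hence $\mathcal{N}(p,t) = (J(\eta(p)),\,\nu)$, and since $J$ preserves the Lorentzian norm of the spacelike vector $\eta$ we have $(J(\eta(p))\ast J(\eta(p))) + \nu^2 = 1$, so $\mathcal{N}$ takes values in $d\mathds{S}^3\subseteq\mathds{L}^3\times\mathds{R}$. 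This is exactly the twisted normal map~(\ref{gaussLR}) of \cite{LR}, proving the corollary.
\end{proof}
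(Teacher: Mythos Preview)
Your proof is correct and follows precisely the paper's own argument: the paper states in the Remark preceding Proposition~\ref{prop7} that $\Gamma_{(p,t)}(u,\nu) = (\Gamma_p(u),\nu)$ and that this, together with Proposition~\ref{prop7}, yields the identification with the twisted normal map, and the Corollary is then stated without further proof. You have simply supplied the details behind the product decomposition of $\Gamma$ that the paper takes for granted, so the approach is essentially identical.
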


This corollary implies (together with Theorems 3.1 and 3.3 of \cite{LR}) the
following result:

\begin{proposition}
\label{thmm2r} Let $M$ be an orientable surface in $\mathcal{M}_{2}
(\kappa)\times\mathds{R}$ oriented w.r.t. an unitary vector field $(\eta,\,\nu)$
normal to $M$. Let $\mathcal{N}$ be the Gauss map of $M$ and let
$\mathcal{Q}_{\mathcal{N}}$ be the complex quadratic form given on
\emph{(}$\ref{QN}$\emph{)}. Then%

\[
\mathcal{Q}_{\mathcal{N}}=\mathcal{Q},
\]
\noindent where $\mathcal{Q}$ is the Abresch-Rosenberg quadratic form of $M$
\emph{(\cite{AR})}.
\end{proposition}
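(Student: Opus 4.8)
The plan is to reduce Proposition \ref{thmm2r} to the already-established facts for $\mathds{S}^{2}\times\mathds{R}$ and to the newly proved identification of $\mathcal{N}$ with the twisted normal map in $\mathds{H}^{2}\times\mathds{R}$. There are really two cases: $\kappa>0$, i.e.\ $\mathcal{M}_2(\kappa)\times\mathds{R}$ is (a rescaling of) $\mathds{S}^{2}\times\mathds{R}$, and $\kappa<0$, i.e.\ a rescaling of $\mathds{H}^{2}\times\mathds{R}$. In the first case, as remarked at the start of Section \ref{quad}, the construction of $\mathcal{N}$ given by (\ref{gaussmap}) is literally the Gauss map of \cite{BR}; since Theorem 3.1 of \cite{LR} says that the complex quadratic form induced by that Gauss map equals the Abresch-Rosenberg form $\mathcal{Q}$, we are done in the spherical case with no further computation. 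So the content of the proposition is the hyperbolic case.

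First I would invoke the Corollary just proved (the one stated immediately above): for $N=\hnr$ the Gauss map $\mathcal{N}$ defined by (\ref{gaussmap}) coincides with the twisted normal map $N\colon M\to d\mathds{S}^3\subseteq\mathds{L}^3\times\mathds{R}$ of (\ref{gaussLR}). This uses Proposition \ref{prop7} together with the product structure $\Gamma_{(p,t)}(u,\nu)=(\Gamma_p(u),\nu)$ noted in the Remark, so that $\Gamma$ acts as $(J,\mathrm{id})$ on $T(\hnr)$. The next step is to feed this identification into the results of \cite{LR}: Theorem 3.3 of \cite{LR} establishes that the complex quadratic form induced by the twisted normal map of a surface in $\hn2\times\mathds{R}$ equals the Abresch-Rosenberg form. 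Since the quadratic form $\mathcal{Q}_{\mathcal{N}}$ of (\ref{QN}) is defined purely in terms of $\mathcal{N}$ and $\Gamma$ (namely $\mathcal{Q}_{\mathcal{N}}=\langle\mathcal{N}_z,\Gamma(F_z)\rangle dz^2$), and both ingredients have now been matched with the corresponding objects in \cite{LR}, we get $\mathcal{Q}_{\mathcal{N}}=\mathcal{Q}$ directly.

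One technical point to check carefully is the normalization: \cite{LR} works with $\mathds{H}^2$ and $\mathds{S}^2$ of curvature $\mp1$, whereas the proposition is stated for general $\kappa$. I would note that both the Gauss map construction of Section \ref{sec2} and the Abresch-Rosenberg form $\mathcal{Q}=2H\mathcal{A}-c\mathcal{T}$ scale consistently under a homothety of the ambient metric (the factor $\kappa$ enters only through the overall scale of the $\mathcal{M}_2(\kappa)$ factor, and the projection $\pi$ is a pseudo-Riemannian submersion for any scaling of the bi-invariant metric on $\mathfrak{g}$), so the identity for general $\kappa$ follows from the unit-curvature case by rescaling. It is worth also matching signs: in $\mathds{H}^2\times\mathds{R}$ the Abresch-Rosenberg form is $2H\mathcal{A}-c\mathcal{T}$ with $c<0$, i.e.\ $2H\mathcal{A}+|c|\mathcal{T}$, which is exactly the "$+\mathcal{T}$'' version in (1.1), and one checks this matches the output of Theorem 3.3 of \cite{LR}; since the orientation conventions for $\eta$ and for $J$ (clockwise $\pi/2$ rotation) are the same as in \cite{LR}, no extra sign discrepancy arises.

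The main obstacle is not a difficult argument but rather bookkeeping: making sure the conformal-structure conventions (the choice $2F_z=F_x-iF_y$, the meaning of the $(2,0)$-part, and the orientation of $M$ used to orient $J$) agree between this paper and \cite{LR}, so that the equality $\mathcal{Q}_{\mathcal{N}}=\mathcal{Q}$ holds on the nose and not merely up to a sign or a conjugation $dz^2\leftrightarrow d\bar z^2$. Once those conventions are verified to coincide, the proof is essentially a citation: $\mathcal{Q}_{\mathcal{N}}=\mathcal{Q}$ in $\mathds{S}^2\times\mathds{R}$ by Theorem 3.1 of \cite{LR} (via the coincidence of $\mathcal{N}$ with the Gauss map of \cite{BR}), and $\mathcal{Q}_{\mathcal{N}}=\mathcal{Q}$ in $\mathds{H}^2\times\mathds{R}$ by the Corollary above together with Theorem 3.3 of \cite{LR}, and the general $\kappa$ case follows by homothety.
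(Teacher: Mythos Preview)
Your proposal is correct and matches the paper's own argument: the paper states the proposition as an immediate consequence of the preceding Corollary (identifying $\mathcal{N}$ with the twisted normal map in $\hnr$) together with Theorems 3.1 and 3.3 of \cite{LR}, exactly as you outline. Your additional remarks on scaling for general $\kappa$ and on matching sign/orientation conventions go beyond what the paper makes explicit, but they are appropriate checks and do not change the approach.
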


Now it follows from our construction of the Gauss map and Theorem $1$ of
\cite{AR}:

\begin{theorem}
\label{teoremam2xr} Let $M$ be a surface immersed either in $\mathds{S}^{2}%
\times\mathds{R}$ or in $\mathds{H}^{2}\times\mathds{R}$. If $\mathcal{N}$ is
the Gauss map of $M$, then there is an equivalence between

\begin{description}
\item[i.] $M$ has constant mean curvature;

\item[ii.] $\mathcal{N}$ is harmonic.
\end{description}

\noindent Moreover, both imply

\begin{description}
\item[iii.] $\mathcal{Q}_{\mathcal{N}}$ is holomorphic on $M$.
\end{description}
\end{theorem}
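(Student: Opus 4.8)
\textbf{Proof plan for Theorem \ref{teoremam2xr}.}
The plan is to assemble this result from the ingredients already in place rather than to compute anything new. The equivalence of (i) and (ii) is nothing other than Corollary \ref{cor1} applied to the ambient space $N = \mathds{S}^2\times\mathds{R}$ or $N=\mathds{H}^2\times\mathds{R}$: both are reducible Riemannian symmetric spaces of the form $\mathbb{G}/\mathbb{K}$ satisfying the standing assumptions of Section \ref{sec2} (with $\mathbb{G}=O(3)\times\mathds{R}$, resp.\ $\mathbb{G}=O(1,2)\times\mathds{R}$), so Corollary \ref{cor1} says verbatim that $M$ has constant mean curvature if and only if its Gauss map $\mathcal{N}$ is harmonic. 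First I would state this reduction explicitly, noting that the Gauss map appearing in Corollary \ref{cor1} is exactly the one used here.

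Next I would invoke Proposition \ref{thmm2r}, which identifies the complex quadratic form $\mathcal{Q}_{\mathcal{N}}$ induced by $\mathcal{N}$ with the Abresch--Rosenberg quadratic form $\mathcal{Q}$ of $M$ (valid in both $\mathds{S}^2\times\mathds{R}$ and $\mathds{H}^2\times\mathds{R}$, since the $\mathds{S}^2\times\mathds{R}$ case follows from Theorem 3.1 of \cite{LR} as $\mathcal{N}$ agrees with the Gauss map of \cite{BR} there, and the $\mathds{H}^2\times\mathds{R}$ case follows from the preceding Corollary identifying $\mathcal{N}$ with the twisted normal map of \cite{LR}). Then by Theorem 1 of \cite{AR}, if $M$ has constant mean curvature the Abresch--Rosenberg form $\mathcal{Q}$ is holomorphic; combining with $\mathcal{Q}_{\mathcal{N}}=\mathcal{Q}$ this gives that (i) implies (iii), and by the equivalence of (i) and (ii) also (ii) implies (iii). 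This is exactly the asserted statement, and the proof is essentially a citation assembly.

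The one point that deserves a word of care — and the only place where the argument is genuinely weaker than in Theorems \ref{thms3} and \ref{equivalh3} — is that here (iii) is \emph{not} claimed to imply (i). The reason is that Abresch--Rosenberg's theorem is a one-directional statement: a CMC surface has holomorphic $\mathcal{Q}$, but the converse fails in general (a Hopf-type uniqueness converse is unavailable for $\mathcal{Q}$ in product spaces). I would point this out in a sentence so the reader does not expect a full three-way equivalence, contrasting with the space-form cases where Hopf's/Chern's results do give the converse via the Hopf differential. Since every step above is a direct appeal to a result stated earlier in the paper or to \cite{AR,LR}, no new estimates or computations are needed; the ``main obstacle'' is purely expository, namely checking that the standing hypotheses of Section \ref{sec2} genuinely cover these two product spaces, which was already verified in Subsection \ref{pre}.
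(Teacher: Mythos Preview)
Your proposal is correct and matches the paper's approach exactly: the paper simply states that the theorem follows from ``our construction of the Gauss map and Theorem 1 of \cite{AR}'', which unpacks precisely into the three citations you assemble (Corollary \ref{cor1} for (i)$\iff$(ii), Proposition \ref{thmm2r} for $\mathcal{Q}_{\mathcal{N}}=\mathcal{Q}$, and \cite{AR} for CMC $\Rightarrow$ $\mathcal{Q}$ holomorphic). Your remark that (iii) does not imply (i) is also exactly what the paper records in the Remark immediately following the theorem, citing \cite{FM2}.
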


\noindent\textbf{Remark. } The converse of this theorem is false. It was shown in \cite{FM2} the existence of certain rotational surfaces in $\hnr$ with holomorphic Abresch-Rosenberg differential that fails to be CMC.

\section{\label{hos}HOS theorem in symmetric spaces of dimension 3}

On \cite{BR}, Theorem 4.9 proves HOS theorem for a complete CMC surface $M$
immersed in a $3$-dimensional homogeneous space $\mathbb{G}/\mathbb{H}$ where
$\mathbb{G}$, up to an abelian factor, is compact. In particular, this result
apply for $M$ immersed in $\mathds{S}^{3}$ and in $\mathds{S}^{2}%
\times\mathds{R}$. We now extend HOS theorem for surfaces immersed in
a symmetric space $N = \mathbb{G}/\mathbb{K}$ as in the preliminaries of Section \ref{sec2}.

\begin{theorem}
\label{HOSthm} Let $N=\mathbb{G}/\mathbb{K}$ be a 3-dimensional symmetric
space as in Section \ref{sec2}. Let $H\geq0$ be given and assume that
$2H^{2}+\operatorname*{Ric}_{N}\geq0,$ where $\operatorname*{Ric}_{N}%
=\min_{\left\vert v\right\vert =1}\operatorname*{Ric}_{N}(v).$ Let $M\ $be a
complete orientable surface immersed with CMC $H$ in $N$. Assume that
$\mathcal{N}\left(  M\right) $ is contained in a hemisphere of the unit
sphere in $\mathfrak{g}$ determined by a nonzero vector $V\in\mathfrak{g}$,
that is, $\left\langle \mathcal{N}\left(  p\right) ,V\right\rangle \leq0$ for
all $p\in M.$ We have:

\begin{description}
\item[$\left.  a\right) $] If $M$ has the conformal type of the disk, then
$M$ is invariant under the 1-parameter subgroup of isometries of $N$
determined by $V$;

\item[$\left.  b\right) $] If $M$ has the conformal type of the plane and
$\zeta(V)$ is a bounded Killing field on $M$ then $M$ is invariant under the
1-parameter subgroup of isometries of $N$ determined by $V$ or $M$ is
umbilical and $\text{Ric}(\eta)=\text{Ric}_{N}$.
\end{description}
\end{theorem}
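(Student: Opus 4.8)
The plan is to reduce Theorem \ref{HOSthm} to an application of the maximum principle and a Liouville-type theorem for the function $u := \langle \mathcal{N}, V\rangle = \langle \eta, \zeta(V)\rangle$ on $M$. By Corollary \ref{cor1}, since $M$ has CMC, $\mathcal{N}$ is harmonic and satisfies $\Delta\mathcal{N} = -(\Vert B\Vert^2 + \mathrm{Ric}(\eta))\mathcal{N}$, so taking the inner product with the fixed vector $V$ gives the scalar equation
\[
\Delta u = -\left(\Vert B\Vert^2 + \mathrm{Ric}(\eta)\right) u .
\]
The hypothesis $2H^2 + \mathrm{Ric}_N \geq 0$ together with the algebraic inequality $\Vert B\Vert^2 \geq 2H^2$ (valid for a surface, since $\Vert B\Vert^2 = \kappa_1^2+\kappa_2^2 \geq \tfrac12(\kappa_1+\kappa_2)^2 = 2H^2$) and $\mathrm{Ric}(\eta)\geq \mathrm{Ric}_N$ shows that the coefficient $\Vert B\Vert^2 + \mathrm{Ric}(\eta)$ is nonnegative. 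Combined with $u \leq 0$ everywhere, this makes $\Delta u = -(\text{nonneg})\cdot u \geq 0$, i.e. $u$ is a nonpositive subharmonic function on $M$ (equivalently $-u \geq 0$ is superharmonic).

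For part $a)$, when $M$ is conformally the disk, I would argue as in the HOS sketch: a nonpositive subharmonic function on the hyperbolic disk that attains a strict interior... actually the cleanest route is the strong maximum principle applied locally — if $u(p_0) = 0$ at some interior point then $u \equiv 0$; otherwise $u < 0$ everywhere, and one must rule this out. The way to rule it out: if $u<0$ strictly, rewrite the equation using the Gauss equation of the surface in the $3$-manifold $N$ to express $\Vert B\Vert^2$ in terms of the intrinsic curvature $K$ of $M$, the mean curvature $H$, and ambient curvature terms, obtaining a PDE of the form $\Delta u - 2Ku + P = 0$ with $P \geq 0$ a bounded function (here is where $\dim N = 3$ and symmetry are used, to control the ambient curvature contributions), contradicting the Fischer-Colbrie--Schoen type nonexistence result (Corollary 3 of \cite{FS}) for positive solutions of such equations on a disk-type surface. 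Hence $u\equiv 0$, which by Proposition \ref{3.4} (the case $\mathcal{H} = (\mathcal{N}(M))^\perp \ni V$) means $M$ is invariant under the $1$-parameter subgroup $\exp(tV)$.

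For part $b)$, when $M$ is conformally the plane, a nonpositive subharmonic function on the plane is bounded above by $0$; if additionally it is bounded below — which is exactly guaranteed by the hypothesis that $\zeta(V)$ is a bounded Killing field on $M$, since $|u| = |\langle\eta,\zeta(V)\rangle| \leq \Vert\zeta(V)\Vert$ — then $u$ is a bounded subharmonic function on the parabolic surface $M$, hence constant. With $u$ constant, the equation $\Delta u = -(\Vert B\Vert^2 + \mathrm{Ric}(\eta))u = 0$ forces either $u \equiv 0$ (giving invariance under $\exp(tV)$ via Proposition \ref{3.4} as before), or $u$ a nonzero constant, in which case $\Vert B\Vert^2 + \mathrm{Ric}(\eta) \equiv 0$; since both $\Vert B\Vert^2 - 2H^2 \geq 0$ and $\mathrm{Ric}(\eta) - \mathrm{Ric}_N \geq 0$ and their sum is $-2H^2 - \mathrm{Ric}_N \leq 0$, each must vanish, so $\Vert B\Vert^2 = 2H^2$, i.e. $\kappa_1 = \kappa_2$ and $M$ is umbilical, and $\mathrm{Ric}(\eta) = \mathrm{Ric}_N$.

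The main obstacle I anticipate is part $a)$: carefully deriving the PDE $\Delta u - 2Ku + P = 0$ with an explicitly nonnegative, bounded $P$ in the general symmetric-space setting. In $\mathds{R}^3$ this is the classical computation with $P = 4H^2$; here one must use the Gauss equation $2K = 2\bar K + 2\det A_\eta$ (with $\bar K$ the ambient sectional curvature of the tangent plane) and the Codazzi/curvature terms coming from $\mathrm{Ric}(\eta)$ and the ambient curvature tensor, and verify that the resulting zeroth-order inhomogeneous term is $\geq 0$ — this is precisely where the bound $2H^2 + \mathrm{Ric}_N \geq 0$ enters and why the lower bound for $H$ in the examples ($H \geq 1/\sqrt2$ in $\hn2\times\R$, $H\geq 1$ in $\hn3$) takes the stated form. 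One must also check that the hypothesis $\langle\mathcal{N},V\rangle \leq 0$ is preserved under lifting to the universal cover if one passes to $\widetilde M$, and handle the sign conventions so that "hemisphere" matches "$u$ has a sign."
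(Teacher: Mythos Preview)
Your proposal is correct and follows essentially the same route as the paper's proof: the function $u=\langle\mathcal{N},V\rangle$, the Laplacian formula from Corollary \ref{cor1}, the inequality $\Vert B\Vert^2\geq 2H^2$ to get subharmonicity, the strong maximum principle plus Fischer--Colbrie--Schoen for part $a)$, and the Liouville argument on a parabolic surface for part $b)$. The one step you flagged as the ``main obstacle'' is handled in the paper by the three-dimensional identity $\mathrm{Ric}(\eta)+2\widetilde K=\mathrm{Ric}(E_1)+\mathrm{Ric}(E_2)\geq 2\,\mathrm{Ric}_N$, which yields the equation $\Delta f-2Kf+Pf=0$ with $P=4H^2+2\widetilde K+\mathrm{Ric}(\eta)\geq 4H^2+2\,\mathrm{Ric}_N\geq 0$ (note the form is $Pf$, not an inhomogeneous $P$), exactly matching the hypothesis of Corollary~3 in \cite{FS}.
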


\begin{proof}
Suppose that $\mathcal{N}(M)$ is contained in a hemisphere of $\mathfrak{g}$
determined by $V$. Let $\pi:\hat{M}\rightarrow M$ be the universal covering of
$M$ and consider $\hat{M}$ as an immersed surface in $N$. Write $f$ as
$f\circ\pi.$ Set $f(p)=\langle\zeta(V)(p),\,\eta(p)\rangle$, $p\in\hat{M},$
where $\zeta(V)$ is the Killing field on $N$ defined on ($\ref{zeta}$). Since
$\langle\zeta(V)(p),\,\eta(p)\rangle=\langle\mathcal{N}(p),\,V\rangle\leq0$,
we have $f\leq0$. Assume first that $\hat{M}$ is conformal to the disk. We
will then show that $f$ vanishes identically and thus Proposition \ref{3.4}
implies that $\hat{M}$ is invariant under the group of isometries generated by
$V$.

Using that $\langle\Gamma(\text{grad}(H)),\,V \rangle=0$, we can compute the
Laplacian of $f$ as on the proof of Theorem \ref{laplacianoN} and obtain that%

\begin{equation}
\Delta f=-\left( \Vert B\Vert^{2}+\text{Ric}(\eta)\right)  f\geq-\left(
2H^{2}+\text{Ric}(\eta)\right)  f\geq0. \label{Lapf}%
\end{equation}

Therefore, $f$ is a subharmonic function on $\hat{M}$. If $f$ vanishes at some
point $p\in\hat{M}$ then, by the maximum principle, $f\equiv0$ and the theorem
is proved on this case. So, let us suppose $f<0$ and get a contradiction. From
the Gauss equation we have $\Vert B\Vert^{2}=4H^{2}-2(K-\widetilde{K})$ where
$K$ is the sectional curvature of $\hat{M}$ and $\widetilde{K}$ is the
sectional curvature of $N$ on tangent planes of $M$. Using this equation on
($\ref{Lapf}$), we obtain%

\begin{equation}
\Delta f-2Kf+\left(  4H^{2}+2\widetilde{K}+\text{Ric}(\eta)\right)  f=0.
\label{eqLapf}%
\end{equation}

Considering an orthonormal basis $E_{1},E_{2}$ of $T\hat{M}$ we obtain
\begin{align*}
\text{$\operatorname*{Ric}$}(\eta)+2\widetilde{K} & =\left\langle
R(\eta,E_{1})\eta,E_{1}\right\rangle +\left\langle R(\eta,E_{2})\eta
,E_{2}\right\rangle +2\left\langle R(E_{1},E_{2})E_{1},E_{2}\right\rangle \\
& = \left\langle R(E_{1},\eta)E_{1},\eta\right\rangle +\left\langle
R(E_{1},E_{2})E_{1},E_{2}\right\rangle \\
& +\left\langle R(E_{2},\eta)E_{2},\eta\right\rangle +\left\langle
R(E_{2},E_{1})E_{2},E_{1}\right\rangle \\
& = \text{$\operatorname*{Ric}$}(E_{1})+\text{$\operatorname*{Ric}$}(E_{2}).
\end{align*}

Then, from the hypothesis
\[
P:=\text{$\operatorname*{Ric}$}(\eta)+2\widetilde{K}+4H^{2}\geq
2\operatorname*{Ric}\nolimits_{N}+4H^{2}\geq0.
\]
Thus $f$ is a negative solution to the equation $\Delta f-2Kf+Pf=0,$ with
$P\geq0,$ which contradicts Corollary 3 of \cite{FS}, as $\hat{M}$ has the
conformal type of the disk. Thus $f\equiv0$ and the first part of the theorem
is proved.

Assume now that $\hat{M}$ is conformal to the plane and that $\zeta\left(
V\right) $ is bounded in $M$. This implies $f$ is a bounded function on $M$.
Since by (\ref{Lapf}) $f$ is subharmonic it follows that $f$ is constant and
then $\Delta f=0$. This implies%

\[
\left( \Vert B\Vert^{2}+\text{Ric}(\eta)\right)  f=0.
\]

\noindent It follows that either $f \equiv0$ (and then $M$ is invariant under
the 1-parameter family of isometries given by $V$) or $\left( \Vert B
\Vert^{2}+\text{Ric}(\eta)\right) \equiv0$. On this case the inequality on
($\ref{Lapf}$) would be a equality, thus we would have%

\[
\Vert B \Vert^{2} = 2H^{2} \text{ and } \text{Ric}(\eta) = \text{Ric}_{N},
\]

\noindent and from $\Vert B \Vert^{2} = 2H^{2}$ it follows $M$ is umbilical as
it is easy to see.
\end{proof}

\bigskip

\noindent\textbf{Remark. }Since an equidistant surface of $\mathbb{H}%
^{3}\left( -1\right) $ (that is, a surface which is at a constant distance
to a totally geodesic surface of $\mathbb{H}^{3})$ has the conformal type of
the disc (since it is isometric to $\mathbb{H}^{2}\left(  c\right) $ for some
$c\in\left[ -1,0\right) )$ and is orthogonal to a hyperbolic Killing field
(that is, the Killing field which orbits are hypercycles equidistant to a
fixed geodesic) we see that the hypothesis $2H^{2}+\operatorname*{Ric}_{N}%
\geq0$ which, in the hyperbolic space, is equivalent to $H\geq1,$ can not be
improved. Also, in the case that $M$ has the conformal type of the plane, if
$\zeta\left(  V\right) $ is not bounded then conclusion may not be true: any
horosphere $S$ in $\mathbb{H}^{3}$ is conformal to the complex plane ($S$ is
isometric to the Euclidean plane) and is everywhere transversal to a
hyperbolic Killing field which is not bounded on $S$.

\noindent\hspace{0.6cm}\'Alvaro Kr\"uger Ramos and Jaime Bruck Ripoll

\noindent\hspace{0.6cm}Departamento de Matematica

\noindent\hspace{0.6cm}Univ. Federal do Rio Grande do Sul

\noindent\hspace{0.6cm}Av. Bento Gon\c calves 9500

\noindent\hspace{0.6cm}91501-970 -- Porto Alegre -- RS -- Brazil

\noindent\hspace{0.6cm}alvaro.ramos@ufrgs.br

\noindent\hspace{0.6cm}jaime.ripoll@ufrgs.br

\noindent\hspace{0.6cm}{\small {(supported by CNPq - Brasil)} }

\end{document}